\newtheorem{proposition}{Proposition}[section]
\newtheorem{theorem}[proposition]{Theorem}
\newtheorem{lemma}[proposition]{Lemma}
\newtheorem{corollary}[proposition]{Corollary}
\newtheorem{definition}[proposition]{{Definition}}
\newtheorem{remark}[proposition]{{Remark}}
\newtheorem{Question}[proposition]{Question}
\newtheorem{Example}[proposition]{Example}
\newcommand{\cA}{{\mathcal A}}
\newcommand{\cB}{{\mathcal B}}
\newcommand{\cI}{{\mathcal I}}
\newcommand{\cN}{{\mathcal N}}
\newcommand{\cX}{{\mathcal X}}
\newcommand{\cQ}{{Q}}
\newcommand{\cH}{{\mathcal H}}
\newcommand{\cT}{{\mathcal T}}
\newcommand{\cG}{{\mathcal G}}
\newcommand{\cC}{{\mathcal C}}
\newcommand{\cS}{{\mathcal S}}
\newcommand{\clU}{{\mathcal U}}
\newcommand{\K}{{ K}}
\newcommand{\Ext}{\operatorname{Ext}\nolimits}
\newcommand{\pd}{\operatorname{pd}\nolimits}
\renewcommand{\dim}{\operatorname{dim}\nolimits}
\newcommand{\gldim}{\operatorname{gl.dim}\nolimits}
\newcommand{\grb}{Gr\"obner\ }
\newcommand{\tip}{\operatorname{tip}}
\newcommand{\nontip}{\operatorname{nontip}}
\newcommand{\Span}{\operatorname{Span}}
\newcommand{\id}{\operatorname{id}\nolimits}
\newcommand{\GrAlg}{\operatorname{GrAlg}\nolimits}
\def\thm@space@setup{%
  \thm@preskip=0.7cM \thm@postskip=0.3cM
}
\begin{document}

\date{today}
\title[Strong Koszul Algebras]
{The geometry of strong Koszul algebras}

\author[Green]{Edward L.\ Green}
\address{Edward L.\ Green, Department of
Mathematics\\ Virginia Tech\\ Blacksburg, VA 24061\\
USA}
\email{green@math.vt.edu}

\subjclass[2010]
{16S37, 14M99,16W60}

\keywords{}
\thanks{}

\begin{abstract}Koszul algebras with quadratic \grb bases, called strong
Koszul algebras, are studied.  We introduce
affine algebraic varieties whose points are in one-to-one correspondence with
certain strong Koszul algebras and we investigate the connection between the
varieties and the algebras. 
\end{abstract}
\date{\today}
\maketitle

\section{Introduction}\label{sec-ntro}The connection between affine algebraic
varieties and commutative rings, especially quotients of commutative polynomial
rings over a field, is well established.  In this paper, we introduce a new connection
between affine algebraic varieties and a class of Koszul algebra which are not 
necessarily commutative.  The varieties we consider have the property that the
points  are in one-to-one correspondence
with certain Koszul algebras.   Given one of these varieties, the Koszul algebras corresponding to
the points are shown to have a number of features in common.

To describe our results more precisely, let $K$ be a field, $\cQ$ a finite quiver, and let
$K\cQ$ denote the path algebra.  An element $x\in K\cQ$ is called  \emph{quadratic} if
$x$ is a $K$-linear combination of paths of length 2.  
We say a (two-sided) ideal $I$ in $K\cQ$ is a \emph{quadratic ideal} if $I$ can be generated
by quadratic elements.  One of many equivalent definitions of a \emph{Koszul algebra} is
the following:  If $J$ is the ideal in $K\cQ$ generated by the arrows of $\cQ$ and
$I$ is a quadratic ideal in $K\cQ$ , then $K\cQ/I$ is a \emph{Koszul algebra} if the Ext-algebra,
$\oplus_{n\ge 0}\Ext^n_{K\cQ/I}(K\cQ/J,K\cQ/J)$, can be
 generated in degrees 0 and 1.  Although this a very special class of algebras, Koszul algebras 
occur in many different settings, for example, see \cite{C,F,Gr1} and their references.

In this paper we study a class of Koszul algebras which we call \emph{strong Koszul algebras}.
To define this class, fix an admissible order $\succ$ on the paths in $\cQ$.  The formal definition of an admissible order
can be found in the beginning of Section \ref{sec-strong}. Such an 
order is needed for $K\cQ$ to have a \grb basis theory.  We provide a brief overview of
the \grb basis theory needed for the paper in Section \ref{sec-strong}.
An algebra $\Lambda=K\cQ/I$ is a \emph{strong Koszul algebra (with respect to $ I$ and
$\succ$)}, if $I$ is a quadratic ideal in  $K\cQ$ 
and $I$ has a \grb basis consisting of quadratic elements.  That a strong Koszul algebra
is in fact a Koszul algebra is proved in \cite{GH}.  Note that not all Koszul algebras are strong; for example, Sklyanin algebras \cite{S} are Koszul algebras that are not strong.    
One important type of a strong Koszul algebra
is of the form  $K\cQ/I^*$, where $I^*$ is an ideal that 
can be generated by a set $\cT$ of paths of length 2 in $\cQ$.
That $K\cQ/I^*$ is a strong Koszul algebra follows from the fact that $\cT$ is a \grb
basis of $I^*$ with respect to any admissible order \cite{GH}.

For each set $\cT$ of paths of length 2 in $K\cQ$, we define an affine algebraic variety $\GrAlg(\cT)$,  such that the points of $\GrAlg(\cT)$ are in one-to-one correspondence
with a particular set of strong Koszul algebras; see Theorem \ref{thm-one} and
Theorem \ref{thm-var}.  We view this correspondence as an identification and show that
each strong Koszul algebra corresponds to a point in one of these varieties; see Corollary \ref{cor-corres}.
In each variety $\GrAlg(\cT)$, there is a distinguished algebra, $K\cQ/I^*$, where $I^* $ is generated by $\cT$.  All other algebras $K\cQ/I$ in $\GrAlg(\cT)$ have the property that $ I$ cannot be generated
by paths.

The class of strong Koszul algebras includes preprojective algebras whose underlying graph is connected
and not a tree \cite{Gr1}, straightening closed algebras generated by minors \cite{GH}, and algebras of the form
$K\cQ/\langle \cT\rangle$ where $\cT$ is a set of paths of length 2 in $K \cQ$ and 
$\langle \cT \rangle $ denotes the ideal generated by  $\cT$.

The algebras
lying in one variety have a number of properties in common.  Suppose that
$\Lambda=K\cQ/I$ and $\Lambda'=K\cQ/I'$ are two strong Koszul algebras in
$\GrAlg(\cT)$. Then we prove the following results. Let $\Lambda^*=K\cQ/\langle \cT\rangle$.
\begin{enumerate}
\item $\dim_K(\Lambda)= \dim_K(\Lambda')= \dim_K(\Lambda^*)$; see Theorem \ref{thm-cartan}.
\item Assuming $\Lambda^*$ is finite dimensional,  then $\gldim(\Lambda)=\gldim(\Lambda')=\gldim(\Lambda^*)$; see Corollary \ref{cor-gldim}. Note that 
\cite{GHZ} provides
a fast algorithm for computing the global dimension of $\Lambda^*$.
\item The Betti  numbers in the minimal projective resolutions of one dimensional simple
modules for all three algebras are the same; see Theorem \ref{thm-ext}.
\item The Cartan matrices of $\Lambda$, $\Lambda'$, and $\Lambda^*$ are the same; see Corollary
\ref{cor-cartan}.
\item If $\Lambda^*$ is quasi-hereditary, then so are $\Lambda$ and $\Lambda'$ \cite{GHS}.  Furthermore
a method for determining if $\Lambda^*$ is quasi-hereditary is given in \cite{GHS}.
\end{enumerate} 
   
Section \ref{sec-ex} is devoted to examples.  In particular, the varieties that include the
commutative polynomial rings of dimension 2 and 3 are investigated; see Examples \ref{ex-poly2}
and \ref{ex-poly3}.
 These examples show that in certain cases, the algebras in the variety $\GrAlg(\cT)$ are 
Koszul Artin-Schelter regular algebras, which
have played a fundamental role in  noncommutative geometry; see, for example, 
\cite{HOZ,MS} and their references.

Section \ref{sec-sub} shows how to restrict the varieties to subvarieties
that can be more tractible than the full variety $\GrAlg(\cT)$.  Section \ref{sec-results} shows that
if $\Lambda$ is a strong Koszul algebra, then so are the opposite algebra $\Lambda^{op}$ and 
the enveloping algebra, $\Lambda\otimes_K\Lambda^{op}$.  The paper ends with some
remarks and questions.

\section{Strong Koszul algebras}\label{sec-strong}
To define a strong Koszul algebra we will need to briefly
review (graded) \grb basis theory. For details we refer the
reader to \cite{Gr2}.  We fix a field $K$ and
a finite quiver $\cQ$.  The set $\cB$   of finite (directed) paths 
forms a $K$-basis of the  path algebra $K\cQ$.  We
positively $\mathbb  Z$-grade $K\cQ=K\cQ_0+K\cQ_1+\cdots$ by defining
$K\cQ_n$ to be the $K$-span of paths in $ \cB$ of length $n$.  This is called
the \emph{length grading} on $K\cQ$.

For a \grb basis theory 
we need a special type of order on $\cB$.   We say a well-order $\succ$ on $\cB$
is \emph{admissible} if, for all $p,q,r,s,t$ in $\cB$,
\begin{enumerate}
\item if $p\succ q$, then $pr\succ qr$ if both $pr$ and $qr$ are nonzero,
\item if $p\succ q$, then $sp\succ sq$ if both $sp$ and $sq$ are nonzero, and
\item if $p=rqt$, then $p\succeq q$.
\end{enumerate}
We fix an admissible order $\succ$ on $\cB$.  Since we are interested in
graded Koszul algebras where the grading is induced from the length
grading of $K\cQ$, we add the requirement that if $p,q\in\cB$ and $\ell( p) >\ell (q)$
then $p\succ q$, where $\ell(p)$ denotes the path length of $p$.  
We call such an admissible order a \emph{length admissible
order}.  Fix a length admissible order $\succ$.  Note that   we give an example of
such an order in the beginning of Section \ref{sec-ex}.

In general, $\cB$ will be an infinite set.  We make the convention that if $x\in K\cQ$ 
and
we write  $x=\sum_{p\in\cB}\alpha_pp$ with $\alpha_p\in K$, then all but a finite
number of $\alpha_p$ eqal 0.
If $x=\sum_{p\in\cB}\alpha_pp$ is a nonzero
element of $K\cQ$, then $\tip(x)=p$ if $\alpha_p\ne 0$ and $p\succeq q$, for
all $q$ with $\alpha_q\ne 0$.  If $X\subseteq K\cQ$, then
\[\tip(X)=\{\tip(x)\mid x\in X \text{ and }x\ne 0\}.\]

We say a nonzero element $x\in K\cQ$ is \emph{uniform} if there exist vertices
$v$ and $w$ such that $x=vxw$.  Paths are always uniform,
and, if $\cQ$ has one vertex and $n$ loops,
then $K\cQ$ is isomorphic to the free algebra on $n$ noncommuting variables and
that every nonzero element of $KQ$ is uniform.

If $I$ is an ideal in $K\cQ$, then we say that $I$ is 
a \emph{graded ideal} if $I =\sum_{n\ge 0}I\cap K\cQ_n$.  Equivalently,
$I$ can be generated by (length) homogeneous elements.  If $I$ is a graded
ideal in $K\cQ$ and $\Lambda=K\cQ/I$, then $\Lambda$ has positive $\mathbb Z$-grading
induced from the length grading on $K\cQ$, which we call the \emph{induced length grading}.

\begin{definition}\label{def-gb}{\rm
Let $I$ be a graded ideal in $K\cQ$ and $\cG$ a set of length homogeneous
uniform elements in $I$.  Then $\cG$ is a \emph{(graded) \grb basis} of $I$ (with respect
to $\succ$) if 
\[
\langle \tip(\cG)\rangle=\langle\tip(I)\rangle.\]

}
\end{definition}

\begin{definition}\label{def-st-kosz}{\rm Let $\Lambda =K\cQ/I$.  We say that $\Lambda$ is a \emph{strong
Koszul algebra} (with respect to $I$ and $\succ$) if $I$ has a  \grb basis with respect
to $\succ$ consisting  of quadratic elements.
 }
\end{definition}

\begin{theorem}\label{thm-is-kz}\cite{GH} A strong Koszul algebra is a Koszul algebra.
\end{theorem}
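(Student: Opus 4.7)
The plan is to prove Theorem \ref{thm-is-kz} by comparing $\Lambda = K\cQ/I$ with its associated monomial algebra $\Lambda^* = K\cQ/\langle \tip(I)\rangle$ and transferring Koszulness from the latter (which is easy) to the former.

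First I would note that, since by hypothesis $I$ admits a quadratic \grb basis $\cG$ with respect to the length admissible order $\succ$, Definition \ref{def-gb} gives $\langle \tip(I)\rangle = \langle \tip(\cG)\rangle$, and the right-hand side is generated by paths of length $2$. Hence $\Lambda^*$ is a quadratic monomial algebra. Standard \grb basis theory also shows that the set of paths in $\cB$ not containing any element of $\tip(\cG)$ as a subpath (the ``nontip'' paths) descends to a $K$-basis of both $\Lambda$ and $\Lambda^*$. In particular $\Lambda$ and $\Lambda^*$ have the same Hilbert series with respect to the induced length grading.

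Next I would establish that $\Lambda^*$ is Koszul. Since $\Lambda^*$ is a quadratic monomial algebra, one can write down its minimal graded projective resolution of $K\cQ/J$ explicitly: the $n$-th syzygy is built from ``$n$-chains'' of overlapping quadratic monomial relations, and these chains sit in internal degree exactly $n$. This yields $\Ext^n_{\Lambda^*}(K\cQ/J,K\cQ/J)$ concentrated in internal degree $n$, which is one of the equivalent definitions of Koszulness.

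Finally, to deduce Koszulness of $\Lambda$, I would invoke the Backelin–Fröberg style Hilbert-series criterion: a quadratic algebra $A$ is Koszul if and only if $H_A(t)\cdot H_{A^!}(-t) = 1$, and in general the termwise inequality $H_A(t)\cdot H_{A^!}(-t) \preceq 1$ always holds. Since $\Lambda$ and $\Lambda^*$ share the same space of quadratic relations up to the tip filtration, their quadratic duals $\Lambda^!$ and $(\Lambda^*)^!$ have the same Hilbert series. Combining $H_\Lambda = H_{\Lambda^*}$, $H_{\Lambda^!} = H_{(\Lambda^*)^!}$, and Koszulness of $\Lambda^*$ (which gives equality $H_{\Lambda^*}(t)\cdot H_{(\Lambda^*)^!}(-t)=1$) forces equality for $\Lambda$ as well, hence $\Lambda$ is Koszul.

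The main obstacle is the last step, and specifically the claim that $\Lambda^!$ and $(\Lambda^*)^!$ have the same Hilbert series; equivalently, one must control the spectral sequence coming from the tip filtration on the bar resolution and show it degenerates so that $\dim_K \Ext^n_\Lambda(K\cQ/J,K\cQ/J)_m = \dim_K \Ext^n_{\Lambda^*}(K\cQ/J,K\cQ/J)_m$ for all $n,m$. A more hands-on alternative, which I would fall back on if the spectral-sequence bookkeeping becomes unwieldy, is to lift the explicit overlap-chain resolution over $\Lambda^*$ to a resolution over $\Lambda$ by iteratively applying the \grb division algorithm to the quadratic differentials; the technical heart of this approach is verifying that the lifted complex remains exact and minimal, which is precisely where quadraticity of the \grb basis $\cG$ is used in an essential way.
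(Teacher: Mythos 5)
The paper offers no argument of its own here---Theorem \ref{thm-is-kz} is quoted from \cite{GH}, and the proof in that reference (reproduced in spirit in this paper's proof of Theorem \ref{thm-ext}, via the construction of \cite{GS}) builds an explicit minimal projective resolution of the simple modules directly from the quadratic \grb basis and observes that it is linear: the $n$-th resolution set has tips exactly the paths $T^n$ of length $n$. That is your \emph{fallback} route, not your main one, and it is the only one of your two routes that can work.

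Your main route contains a genuine gap: the Hilbert-series criterion you invoke is not an equivalence. Fr\"oberg's formula $H_A(t)\,H_{A^{!}}(-t)=1$ is a \emph{necessary} condition for a quadratic algebra $A$ to be Koszul, but it is not sufficient; Positselski (and also Roos) constructed non-Koszul quadratic algebras satisfying the identity. The reason is visible from the Euler-characteristic identity $H_A(t)^{-1}=\sum_{n,m}(-1)^n\dim_K\Ext^{n,m}_A(K,K)\,t^m$: equality with $H_{A^{!}}(-t)$ only forces the alternating sums $\sum_{n<m}(-1)^n\dim_K\Ext^{n,m}_A(K,K)$ to vanish for each $m$, not the individual off-diagonal groups $\Ext^{n,m}_A$ with $m>n$, whose vanishing is what Koszulness means. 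So even granting both of your Hilbert-series equalities, Koszulness of $\Lambda$ does not follow. Moreover, the input equality $H_{\Lambda^{!}}=H_{(\Lambda^*)^{!}}$ is itself only asserted: the relation spaces of $\Lambda$ and $\Lambda^*$ are different subspaces of the degree-two component, and equality of Hilbert series of an algebra and its associated monomial algebra does not formally pass to quadratic duals. (It is in fact true---the quadratic dual of an algebra with a quadratic \grb basis again has one, which is Priddy's PBW phenomenon---but proving that is essentially as hard as proving Koszulness directly, at which point the detour through Hilbert series is unnecessary as well as insufficient.) To repair the proof you should promote the fallback to the main argument: lift the overlap-chain resolution of $K\cQ/\langle\tip(\cG)\rangle$ across the \grb division algorithm, exactly as in \cite{GS}, and verify that quadraticity of $\cG$ forces each $F^n$ to consist of length-homogeneous elements of degree $n$, hence minimality and linearity of the resolution; that verification, which you defer, is the entire content of the theorem.
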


The converse is false in general, for example, the Sklyanin algebras \cite{S}.

For the remainder of this section we look more closely at the strong Koszul algebras.  If $X$ is
a subset of $K\cQ$, then we define
\[\nontip(X)=\cB\setminus \tip(X)\]

We have the following result whose proof is left to the reader.

\begin{proposition}\label{prop-tip-nontip}.  Let $\cT$ be a set of paths in $K\cQ$ such that
if $t,t'\in\cT$ and $t\ne t'$ then $t$ is not a subpath of $t'$.  Let $I=\langle \cT\rangle$.
\begin{enumerate}
\item $n\in\nontip(I)$ if and only if no path in $\cT$ is a subpath of $n$.
\item $t=a_1a_2\cdots a_n$ with $a_i\in\cQ_1$ is in $\cT$ if and only if $t\notin\nontip(I)$ but
$a_2a_3\cdots a_n$ and  $a_1a_2\cdots a_{n-1}$ are in $\nontip(I)$.
\end{enumerate}\qed\end{proposition}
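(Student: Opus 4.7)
The plan is to establish first, as the key lemma underpinning both parts, the equality
\[
\tip(I)=\{p\in\cB:\text{some }t\in\cT\text{ is a subpath of }p\}.
\]
The inclusion $\supseteq$ is immediate: if $p=rts$ with $t\in\cT$, then $p\in I\cap\cB$ and $\tip(p)=p$. For $\subseteq$, I would invoke the fact (noted in the introduction and proved in \cite{GH}) that any set of paths is a \grb basis of the ideal it generates, so $\cT$ is a \grb basis of $I$; Definition \ref{def-gb} then gives $\langle\tip(I)\rangle=\langle\tip(\cT)\rangle=\langle\cT\rangle$, which forces every $p\in\tip(I)$ to contain some $t\in\cT$ as a subpath. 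Statement (1) is now just the contrapositive of this equivalence, together with the definition $\nontip(I)=\cB\setminus\tip(I)$.

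For part (2), I would apply (1) in both directions. Forward: $t$ is a subpath of itself, so $t\notin\nontip(I)$; and if either $a_2\cdots a_n$ or $a_1\cdots a_{n-1}$ failed to lie in $\nontip(I)$, then (1) would produce some $t'\in\cT$ that is a subpath of this strictly shorter path, hence a subpath of $t$ of length at most $n-1$. In particular $t'\ne t$, contradicting the hypothesis that no element of $\cT$ is a subpath of a distinct element of $\cT$. Converse: since $t\notin\nontip(I)$, (1) yields $t'\in\cT$ and paths $u,v$ with $t=ut'v$; if $u$ had positive length, then $t'$ would be a subpath of $a_2\cdots a_n$, contradicting $a_2\cdots a_n\in\nontip(I)$, and symmetrically for $v$ and $a_1\cdots a_{n-1}$. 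Hence $u$ and $v$ are vertices, so $t=t'\in\cT$.

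The only substantive step is the key lemma, and it follows directly from the \grb basis property already cited; the rest is bookkeeping with subpaths. The non-subpath hypothesis on $\cT$ is used precisely in the length-comparison step of (2), ensuring that the $t'\in\cT$ produced by (1) cannot coincide with $t$ itself when it sits inside a proper subpath.
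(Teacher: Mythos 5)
Your proof is correct, and there is nothing in the paper to compare it against: the author leaves this proposition to the reader (it is stated with a \qed and no argument). Your structure --- first establishing the key description $\tip(I)=\{p\in\cB : p \text{ has a subpath in }\cT\}$, then reading off (1) as its contrapositive and (2) by subpath bookkeeping, with the non-subpath hypothesis entering exactly where you say it does --- is the natural route and every step checks out. One remark: the only substantive inclusion, $\tip(I)\subseteq\{p : p \text{ has a subpath in }\cT\}$, is carried entirely by the cited fact that a set of paths is a \grb basis of the ideal it generates; that fact is indeed standard (overlaps of monomials resolve trivially, and the paper asserts it for length-two paths citing \cite{GH}), but it is essentially equivalent to your key lemma, so the citation is doing the real work. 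If you want the argument self-contained, note that any element of $\langle\cT\rangle$ expands as a $K$-combination of paths of the form $utv$ with $t\in\cT$ and $u,v$ paths, so by linear independence of $\cB$ every path appearing with nonzero coefficient in an element of $I$ --- in particular its tip --- contains some $t\in\cT$ as a subpath; this replaces the citation at no extra cost and also subsumes the monomial-membership step you use implicitly when passing from $p\in\langle\cT\rangle$ to ``$p$ has a subpath in $\cT$.''
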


Our next result is fundamental and is slightly more
general than the result found in  \cite{Gr2}. 
Let  $\cS$ denote the subalgebra of $K\cQ$ generated
by the vertices of $\cQ$.  Note that $\cS$ is a semisimple $K$-algebra.  
If $\cX$ is a set of paths in $\cQ$,  $\Span_K(\cX)$ is an $S$-bimodule
as follows: if $\sum_{x\in\cX}\alpha_xx\in \Span_K(\cX)$ and $v,w\in\cQ_0$,
then $v(\sum_{x\in\cX}\alpha_xx)w=\sum_{x\in\cX}\alpha_x(vxw)$.
The proof found in \cite{Gr2} can easily be adjusted from $K$-vector spaces to
$S$-bimodules.

\begin{lemma}\label{lem-fund}{\rm\bf (Fundamental Lemma)}
\ If $I$ is an ideal in $K\cQ$, then
\[ K\cQ = I\oplus \Span_K(\nontip(I)),\]
as $S$-bimodules.

\end{lemma}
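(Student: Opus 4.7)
The plan is to first establish the decomposition $K\cQ = I \oplus \Span_K(\nontip(I))$ as $K$-vector spaces via a standard Gr\"obner-style reduction on tips, and then to upgrade it to a decomposition of $S$-bimodules by observing that each summand is stable under left and right multiplication by vertices. Because $\succ$ is a well-order on $\cB$ and each element of $K\cQ$ has finite support, the vector-space argument can be organized as a minimal-counterexample argument rather than a transfinite induction.

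For the \emph{spanning} claim, let $X \subseteq K\cQ$ denote the set of elements that cannot be written as $i + n$ with $i \in I$ and $n \in \Span_K(\nontip(I))$. Note $0 \notin X$. If $X$ were non-empty, the set $\{\tip(x) : x \in X\}$ would be a non-empty subset of $\cB$, so by the well-order it has a minimum $p$; pick $x \in X$ with $\tip(x) = p$ and coefficient $\alpha$ on $p$. Two cases arise. If $p \in \nontip(I)$, then $x - \alpha p$ is either $0$ or has tip strictly below $p$, so by minimality it lies outside $X$ and decomposes, and hence so does $x$, a contradiction. If $p \in \tip(I)$, pick $y \in I$ with $\tip(y) = p$ and coefficient $\beta$ on $p$; then $x - (\alpha/\beta)y$ has tip strictly below $p$ (or is $0$), so decomposes, and since $(\alpha/\beta)y \in I$ we again get a decomposition of $x$.

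For the \emph{intersection} claim, suppose $z \in I \cap \Span_K(\nontip(I))$ is nonzero. Because $z \in I$, $\tip(z) \in \tip(I)$; because $z$ is a $K$-linear combination of paths in $\nontip(I) = \cB \setminus \tip(I)$, the tip $\tip(z)$ is itself one of those paths and so $\tip(z) \in \nontip(I)$. These statements are incompatible, forcing $z = 0$.

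Finally, to promote the decomposition from $K$-vector spaces to $S$-bimodules, note that $I$ is a two-sided ideal and hence automatically an $S$-subbimodule. For $\Span_K(\nontip(I))$, the key observation is that for any $v, w \in \cQ_0$ and any path $p \in \cB$, the product $vpw$ equals $p$ when $v$ is the source and $w$ the target of $p$, and equals $0$ otherwise; in particular $p \in \nontip(I)$ forces $vpw \in \nontip(I) \cup \{0\} \subseteq \Span_K(\nontip(I))$, and extending by $K$-linearity shows the span is closed under the $\cS$-action on both sides. The argument has no real obstacle: it is the standard normal-form reduction of \cite{Gr2}, with the single bookkeeping point that the choices of $\alpha$ and $\alpha/\beta$ are dictated precisely to cancel the leading coefficient and strictly decrease the tip, and the observation that vertex multiplication on paths is idempotent is exactly what makes the $S$-bimodule upgrade automatic.
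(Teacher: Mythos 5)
Your proof is correct. The paper gives no inline argument for this lemma---it simply cites \cite{Gr2} and remarks that the proof there ``can easily be adjusted from $K$-vector spaces to $S$-bimodules''---and your write-up is exactly that standard argument: the well-order/minimal-tip reduction for spanning, the tip-versus-nontip contradiction for $I \cap \Span_K(\nontip(I)) = 0$, and the observation that both summands are stable under vertex multiplication, which is precisely the routine bimodule upgrade the paper alludes to.
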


We say an ideal $I$ in $K\cQ$ is a  \emph{monomial} ideal if $I$ can be generated
by paths.  Note that a monomial ideal is a  graded ideal. The proof of the following
well-known result is left to the reader.

\begin{proposition}\label{prop-mono}Let $L$ be a monomial ideal in $K\cQ$.
Then
\begin{enumerate}
\item an element $x=\sum_{p\in\cB}\alpha_pp$ with $\alpha_p\in K$ is in $L$ if and
only if, for each $\alpha_p\ne 0$, $p\in L$, and
\item there is a unique minimal set of  paths that generate $L$.\qed
\end{enumerate}
\end{proposition}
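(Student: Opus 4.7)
The strategy is to combine Lemma \ref{lem-fund} with Proposition \ref{prop-tip-nontip}. First I would choose a set $\cT$ of paths generating $L$ in which no element is a subpath of another; this exists because $L$ is monomial and we may discard any generator that contains another generator. Proposition \ref{prop-tip-nontip}(1) then characterizes $\nontip(L)$ as the set of paths having no $t \in \cT$ as a subpath. Set $\cP_L := \cB \setminus \nontip(L)$. Every $p \in \cP_L$ factors as $p = rtq$ for some $r,q \in \cB$ and $t \in \cT$, hence lies in the two-sided ideal $L$.

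For (1), the ``if'' direction is immediate from $L$ being a $K$-subspace. For ``only if'', given $x = \sum_{p \in \cB} \alpha_p p \in L$, write $x = y + z$ where $y := \sum_{p \in \cP_L} \alpha_p p$ and $z := \sum_{p \in \nontip(L)} \alpha_p p$. Since $\cP_L \subseteq L$, we have $y \in L$, so $z = x - y \in L \cap \Span_K(\nontip(L))$. Lemma \ref{lem-fund} forces $z = 0$, so $\alpha_p = 0$ for every $p \in \nontip(L)$; thus every nonzero term of $x$ lies in $\cP_L \subseteq L$.

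For (2), let $\cT_{\min}$ be the set of paths in $L$ having no proper subpath in $L$; by construction this is an antichain under the subpath relation. A well-founded induction on path length shows that every path in $L$ contains some $t \in \cT_{\min}$ as a subpath, and combined with (1) this gives $\langle \cT_{\min} \rangle = L$. The antichain property forces minimality: removing any $t \in \cT_{\min}$ leaves a set whose ideal cannot contain $t$, since that would require a proper subpath of $t$ to lie in $L$. For uniqueness, let $\cT'$ be any minimal path-generating set of $L$. For each $t' \in \cT'$, some $t \in \cT_{\min}$ is a subpath of $t'$, and in turn some $t'' \in \cT'$ is a subpath of $t$; minimality of $\cT'$ forces $t'' = t'$, whence $t = t' \in \cT_{\min}$. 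Thus $\cT' \subseteq \cT_{\min}$, and the reverse inclusion follows by the symmetric argument using minimality of $\cT_{\min}$.

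I anticipate no substantial obstacle; the only mild subtlety is that the expansion of $\sum_i a_i t_i b_i$ in the path basis keeps every resulting monomial of the form $rtq$ with $t \in \cT$ (i.e., there are no cancellations producing paths outside $\cP_L$), which is guaranteed because products of basis paths are themselves paths or zero.
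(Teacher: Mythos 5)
The paper offers no proof to compare against: it states the result as well known and leaves it to the reader, so your argument can only be judged on its own terms, and on those terms it is essentially correct. It is, however, a heavier route than the standard one. For part (1) you pass to an antichain generating set, invoke Proposition \ref{prop-tip-nontip}(1) to get $\cP_L\subseteq L$, and then use directness of the sum in Lemma \ref{lem-fund} to kill the nontip component of $x$; this respects the paper's ordering of results (both cited facts precede the statement), so there is no circularity, and the decomposition $x=y+z$ is a genuinely slick finish. But the elementary proof is exactly the expansion fact you demote to a closing remark: writing $x=\sum_i a_i t_i b_i$ and expanding $a_i,b_i$ in the path basis, every monomial that can occur has the form $q t_i r\in L$, and cancellation only deletes monomials, so every path occurring in $x$ already lies in $L$ --- no appeal to \grb machinery needed. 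Moreover, that expansion fact does real work \emph{inside} your proof, not just beside it: Proposition \ref{prop-tip-nontip} is itself left unproved by the paper (its proof is this same expansion argument), and in your uniqueness argument the step ``some $t''\in\cT'$ is a subpath of $t$'' cannot quote Proposition \ref{prop-tip-nontip} directly, since that proposition assumes the antichain hypothesis, which you have not verified for the minimal generating set $\cT'$. The hole is easily plugged --- either apply the expansion fact, which holds for an arbitrary set of path generators, or note that a minimal path generating set is automatically an antichain because a generator properly containing another generator is redundant --- but as written that step is unjustified. Your part (2) is otherwise the standard argument (subpath-minimal paths $\cT_{\min}$ for existence, induction on length for generation, the two-sided subpath chase for uniqueness) and is correct.
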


We apply the second part of the above proposition and the Fundamental Lemma as follows.
Let $I$ be a graded ideal in $K\cQ$.  Then $\langle \tip(I)\rangle$, the two-sided ideal generated
by $\tip(I)$, is a monomial ideal. Hence there is a unique minimal subset, $\cT$, of $\tip(I)$,
that generates $\langle \tip(I)\rangle$.  By the Fundamental Lemma, for each $t\in\cT$,
there is a unique $g_t\in I$ and a unique $n(t)\in\Span_K(\nontip(I)$ such that
$t=g_t+n(t)$.  In particular, for each $t\in\cT$, $t-n_t\in I$.

\begin{proposition}\label{prop-red}  The set $\cG=\{g_t\mid t\in \cT\}$ is a graded \grb basis
for $I$.
\end{proposition}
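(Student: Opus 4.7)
The plan is to verify, directly from Definition \ref{def-gb}, that $\cG$ consists of length-homogeneous uniform elements of $I$ and that $\langle \tip(\cG)\rangle = \langle \tip(I)\rangle$. The construction of $\cG$ already produces elements of $I$, so the work is in (a) checking the ``length homogeneous and uniform'' condition, and (b) pinning down $\tip(g_t)$ for each $t\in\cT$.

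First I would exploit the $S$-bimodule and graded refinements of the Fundamental Lemma. Since $I$ is graded, the splitting $K\cQ=I\oplus \Span_K(\nontip(I))$ respects the length grading, and Lemma \ref{lem-fund} gives that it is in fact an $S$-bimodule splitting. For $t\in\cT\subseteq\cB$, the path $t$ is itself length-homogeneous of degree $\ell(t)$ and uniform (with $t = vtw$ for the source $v$ and target $w$ of $t$). Projecting $t$ onto the two summands commutes with both the grading and the $S$-bimodule action, so $g_t$ and $n(t)$ are also homogeneous of degree $\ell(t)$ and satisfy $g_t = vg_tw$, $n(t)=vn(t)w$. This settles that every element of $\cG$ is length-homogeneous and uniform in $I$.

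Next I would show $\tip(g_t)=t$. Writing $g_t = t - n(t)$, the paths occurring in $n(t)$ all lie in $\nontip(I)$ and, by the previous step, all have length $\ell(t)$, so they are $\succ$-comparable to $t$ without recourse to the length-tiebreaking clause. Suppose some path $p$ appearing in $n(t)$ with nonzero coefficient satisfied $p\succ t$. Then $\tip(g_t)=p$, hence $p\in\tip(I)$, contradicting $p\in\nontip(I)$. Therefore $t\succ p$ for every such $p$, and $\tip(g_t)=t$. Consequently $\tip(\cG)=\cT$.

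Finally, by the construction of $\cT$ (as the unique minimal set of paths generating the monomial ideal $\langle \tip(I)\rangle$, via Proposition \ref{prop-mono}(2)), we have $\langle \cT\rangle = \langle \tip(I)\rangle$, so $\langle \tip(\cG)\rangle=\langle \cT\rangle = \langle \tip(I)\rangle$, proving $\cG$ is a graded \grb basis for $I$. The only genuinely delicate point is the second step: one has to notice that the length-admissibility of $\succ$ together with the graded form of the Fundamental Lemma forces $t$ to dominate the nontip support of $n(t)$; everything else is bookkeeping.
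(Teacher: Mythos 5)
Your proof is correct and takes essentially the same route as the paper's: both rest on the unicity of the decomposition in the Fundamental Lemma (your observation that the splitting is graded and an $S$-bimodule splitting is exactly the paper's degree-truncation argument, packaged structurally), plus the dichotomy that $\tip(g_t)\in\tip(I)$ while the support of $n(t)$ lies in $\nontip(I)$, and finally the minimality of $\cT$. One cosmetic slip: a path $p\succ t$ in the support of $n(t)$ need not itself equal $\tip(g_t)$, but the contradiction survives unchanged, since then $\tip(g_t)$ would still be some path in $\nontip(I)$ while lying in $\tip(I)$.
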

\begin{proof}
 Each $g_t\in I$ implies that $\tip(g_t)\in\tip(I)$.  Since $n(t)\in \Span_K(\nontip(I))$, we
conclude that, for each $t\in\cT$, $\tip(g_t)=t$.  Next we show that $\cG$
consists of uniform length homogeneous elements.  Letting $t\in\cT$ be a path of length $m$,
writing $t=g_t+n(t)$ we see that, in degree $m$, $(g_t)_m\in I$ and $n(t)_m$ remains in
$\Span_K(\nontip(I))$.  We have that $t=(g_t)_m+n(t)_m$ and, by unicity, each $g_t$ is a length homogeneous element.  The proof that each $g_t$ is uniform is similar.
Since $\cT$ generates $\langle\tip(I)\rangle$, $\cT=\tip(\cG)$, the elements of $\cG$ 
are uniform, length homogenous, and hence we  are done.
\end{proof}

\begin{definition}\label{def-red}{\rm Given a graded ideal $I$ in $K\cQ$ and $\cG$ as constructed
above, we call $\cG$ the \emph{reduced} \grb basis for $I$ (with respect to $\succ$).}
\end{definition}

Returning to strong Koszul algebras, if an ideal has a 
\grb basis $\cH$ of uniform quadratic
elements then $\tip(\cH)$ consists of paths of length 2, and hence the reduced \grb basis
consists of quadratic uniform elements.  Thus, 
\begin{proposition}\label{prop-ska}We  have that 
$\Lambda=K\cQ/I$ is a strong Koszul algebra if and only
the reduced \grb basis consists of quadratic uniform elements.
\end{proposition}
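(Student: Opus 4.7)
The plan is to prove the two implications separately. The reverse direction is essentially a tautology: if the reduced \grb basis consists of quadratic uniform elements, then it is in particular a \grb basis of $I$ made up of quadratic elements, so $\Lambda$ is a strong Koszul algebra by Definition \ref{def-st-kosz}.

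For the forward direction, I would start by assuming $\Lambda=K\cQ/I$ is strong Koszul, so that there exists \emph{some} \grb basis $\cH$ of $I$ consisting of quadratic elements (which, by Definition \ref{def-gb}, are automatically uniform and length homogeneous). The tip of a quadratic uniform element is a path of length $2$, so $\tip(\cH)$ is a set of length-$2$ paths, and since $\cH$ is a \grb basis we have $\langle \tip(\cH)\rangle = \langle \tip(I)\rangle$. Thus the monomial ideal $\langle \tip(I)\rangle$ is generated by length-$2$ paths.

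Next, let $\cT$ be the unique minimal monomial generating set of $\langle \tip(I)\rangle$ furnished by Proposition \ref{prop-mono}(2). The crux is to observe that $\cT$ is contained in every monomial generating set: any $t\in\cT$ lies in $\langle\tip(\cH)\rangle$, so some element of $\tip(\cH)$ occurs as a subpath of $t$, and the minimality of $t$ forces $t$ itself to be that element, hence to have length $2$. Therefore $\cT$ consists of length-$2$ paths, and Proposition \ref{prop-red} produces the reduced \grb basis $\cG=\{g_t\mid t\in\cT\}$ whose members are uniform and length homogeneous with $\tip(g_t)=t$. Length homogeneity together with a degree-$2$ tip forces each $g_t$ to lie entirely in degree $2$, i.e.\ to be quadratic, completing the proof. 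The main (rather modest) obstacle is exactly the subpath/minimality argument showing that $\cT$ inherits the length-$2$ property from $\tip(\cH)$; everything else is a direct bookkeeping invocation of the Fundamental Lemma and the construction of the reduced \grb basis.
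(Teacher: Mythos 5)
Your proof is correct and follows essentially the same route as the paper, which disposes of the proposition in one sentence preceding its statement: the tips of a quadratic \grb basis $\cH$ are length-$2$ paths, hence the reduced \grb basis (via Proposition \ref{prop-red}) is quadratic and uniform, with the converse being immediate from Definition \ref{def-st-kosz}. Your explicit subpath/minimality argument showing $\cT\subseteq\tip(\cH)$ merely fills in the detail the paper leaves implicit in its ``hence.''
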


\section{The variety $\GrAlg(\cT)$}\label{sec-var}
In this section, if $\cT$ is a set of paths of length 2, we define an affine variety
whose points are in one-to-one correspondence to the strong Koszul algebras $\Lambda=K\cQ/I$
(with respect to $I$ and $\succ$) ,
having the property that $\langle\tip(I)\rangle$ is generated by $\cT$.  Referring to Example
\ref{ex-gd} while reading this section
should be helpful.

 Fix $\cT$ to be a set of paths of length 2.
Set $\cN=\cB\setminus \tip( \langle\cT\rangle)$.   Recall that if $I $ is an ideal such
that $\langle \tip(I)\rangle =\langle \cT\rangle$, then $\cN=\nontip(I)$.  It is important
to note that $\cN$ is only dependent on $\cT$ and not on $I$.   

We begin by defining the affine space in which our variety lives.  For this we need the following
definitions.  
We say two elements $x$ and $y$ of $K\cQ$  are    \emph{parallel} if there are vertices $v$ and $w$ such
that $vxw=x$ and $vyw=y$.  In particular, if $x$ and $y$ are parallel then both $x$ and
$y$ are uniform. 
If $x$ and $y$ are parallel, we write $x\| y$. Note that if $x=\sum_{p\in\cB}\alpha_pp\in K\cQ$, then $x$ is
uniform if and only if $p\| q$, for all $p,q\in\cB$ with $\alpha_p$ and $\alpha_q$ 
nonzero.

Let $\cN_2$ be the set of paths of length 2 in $\cN=\cB\setminus \tip(\langle\cT\rangle)$ and, for $t\in\cT$, define
\[ \cN_2(t)=\{n\in\cN_2\mid t\succ n\text{ and } n\| t\}.\] 

We now can define the affine space in which our variety lives.
If $S$ is a set, then $|S|$ denotes the cardinality of $S$.  Let $D=\sum_{t\in\cT}|\cN_2(t)|$.
We let $\cA=K^D$, viewed as a $D$-dimensional affine space.   If $X\in \cA$, then
we write $X$ as tuple with indices in the disjoint union of
the $\cN_2(t)$'s; that is, we write $X=(x_{t,n})$, where $t\in \cT$, $
n\in\cN_2(t)$, and $x_{t,n}\in K$.   

For each $X=(x_{t,n})\in\cA$, let \[\cG(X)=
\{g_t\in K\cQ\mid g_t=t-\sum_{n\in\cN_2(t)}x_{t,n}n\}.\]

We now define the subset of $\cA$ of interest.
\begin{definition}{\rm Given a set $\cT$ of paths of length $2$, define
\[ \GrAlg(\cT)=\{X\in\cA\mid K\cQ/\langle \cG(X)\rangle \text{ is a strong Koszul
algebra (with}\]\[\text{ respect to }\langle \cG(X)\rangle \text{ and }\succ)
\}
\]
}
\end{definition}
The remainder of this section is devoted to showing that $\GrAlg(\cT)$ is an affine
variety in $\cA$ whose points are in one-to-one correspondence with the elements of
\[\clU =\text{ the set of
algebras }K\cQ/I\text{ that are strong Koszul algebras}\]\[\text{ (with respect to }I\text{ and }\succ)\text{ such
that }\langle \cT\rangle=\langle \tip(I)\rangle.\]First we will show the one-to-one correspondence.  We begin with a   preparatory
result.

\begin{proposition}\label{prop-oneone}  If  $K\cQ/I\in\clU$ then there exists $X\in\cA$
such that the reduced \grb basis of $I$ is $\cG(X)$ for some $X$.   Moreover, $X$ is
unique.
\end{proposition}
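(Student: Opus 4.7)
The plan is to read off $X$ directly from the reduced Gr\"obner basis of $I$. Since $K\cQ/I$ is strong Koszul, Proposition~\ref{prop-ska} tells us its reduced Gr\"obner basis $\cG$ consists of quadratic uniform elements. Because $\tip(\cG)$ minimally generates the monomial ideal $\langle\tip(I)\rangle = \langle \cT\rangle$, the uniqueness part of Proposition~\ref{prop-mono} forces $\tip(\cG) = \cT$. Thus for each $t \in \cT$ there is a unique $g_t \in \cG$ with $\tip(g_t)=t$, and Proposition~\ref{prop-red} together with the Fundamental Lemma (Lemma~\ref{lem-fund}) writes $g_t = t - n(t)$ with $n(t) \in \Span_K(\cN)$.

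The main step is to check that the support of $n(t)$ is contained in $\cN_2(t)$. Length homogeneity of $g_t$ (Proposition~\ref{prop-red}) and $\ell(t)=2$ force every path in the support of $n(t)$ to have length $2$. Uniformity of $g_t$, together with $\tip(g_t)=t$, gives $g_t \in v\,K\cQ\, w$ where $v,w$ are the source and target of $t$; hence every path in the support of $n(t)=t-g_t$ is parallel to $t$. That $t$ is the tip of $g_t$ forces $t \succ n$ for every $n$ occurring in $n(t)$. Combined with $n(t) \in \Span_K(\cN)$, this places the support of $n(t)$ inside $\cN_2(t)$. Writing $n(t) = \sum_{n \in \cN_2(t)} x_{t,n}\, n$ therefore defines $X = (x_{t,n}) \in \cA$, and by construction $\cG(X) = \cG$.

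For uniqueness, the Fundamental Lemma gives a unique decomposition $t = g_t + n(t)$ with $g_t \in I$ and $n(t) \in \Span_K(\cN)$, so the reduced Gr\"obner basis is intrinsically determined by $I$. Since the $x_{t,n}$ are precisely the coefficients of $n(t)$ on the linearly independent set $\cN_2(t)$, the tuple $X$ is unique. The only real obstacle is the support analysis in the middle paragraph; none of its conditions—length $2$, parallel to $t$, nontip, and $\prec t$—is individually deep, but they must be assembled together so that the tuple $X$ produced by this read-off procedure is actually a legitimate point of the affine space $\cA$ built around the sets $\cN_2(t)$.
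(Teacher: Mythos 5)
Your proof is correct and follows the same route as the paper: identify $\cT$ as the unique minimal generating set of $\langle\tip(I)\rangle$, invoke the reduced Gr\"obner basis construction (Fundamental Lemma plus Proposition~\ref{prop-red}) to write each $g_t = t - n(t)$, and get uniqueness of $X$ from uniqueness of the reduced Gr\"obner basis. The only difference is that you explicitly verify the support of $n(t)$ lies in $\cN_2(t)$ (length $2$, parallel to $t$, nontip, $\prec t$), a check the paper compresses into ``it now follows from our discussion of the reduced Gr\"obner basis''; this is a welcome filling-in of detail, not a different argument.
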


\begin{proof}   Suppose that $K\cQ/I\in\clU$.  Since $\langle \cT\rangle=\langle \tip(I)\rangle$
and $\cT$ are paths of length 2, $\cT$ must be the unique minimal generating set of the
monomial ideal $\langle \tip(I)\rangle$.  It now follows from our discussion of the reduced
\grb basis that there is some $X\in\cA$ such that $I=\langle \cG(X)\rangle$.  Uniqueness follows
from the uniqueness of the reduced \grb basis.
\end{proof}

\begin{corollary}\label{cor-corres} If $\Lambda=K\cQ/I$ is a strong Koszul algebra
(with respect to $I$ and  $\succ$), then $\Lambda\in\GrAlg(\cT)$ where
$\cT$ is the minimal set of generators of $\langle \tip(I)\rangle$.
\end{corollary}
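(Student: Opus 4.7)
The plan is to combine Proposition \ref{prop-ska} with Proposition \ref{prop-oneone} to exhibit the point of $\GrAlg(\cT)$ that $\Lambda$ corresponds to.

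First I would use the hypothesis that $\Lambda = K\cQ/I$ is a strong Koszul algebra with respect to $I$ and $\succ$ to invoke Proposition \ref{prop-ska}: the reduced \grb basis $\cG$ of $I$ consists of quadratic uniform elements. Consequently, $\tip(\cG)$ is a set of paths of length $2$, and by construction of the reduced \grb basis, $\tip(\cG)$ minimally generates the monomial ideal $\langle\tip(I)\rangle$. By the uniqueness statement in Proposition \ref{prop-mono}(2), this set equals $\cT$. In particular $\cT$ is a set of paths of length $2$, so the variety $\GrAlg(\cT)$ is defined.

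Next I would verify membership in $\clU$. By definition of $\cT$ we have $\langle\cT\rangle = \langle\tip(I)\rangle$, and $K\cQ/I$ is strong Koszul with respect to $I$ and $\succ$, so $K\cQ/I \in \clU$. Applying Proposition \ref{prop-oneone} yields a unique $X \in \cA$ such that the reduced \grb basis of $I$ equals $\cG(X)$; in particular $I = \langle \cG(X)\rangle$. Since $K\cQ/\langle \cG(X)\rangle = K\cQ/I$ is strong Koszul (with respect to $\langle \cG(X)\rangle = I$ and $\succ$), the point $X$ satisfies the defining condition of $\GrAlg(\cT)$, so $X \in \GrAlg(\cT)$.

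Finally, under the identification of points of $\GrAlg(\cT)$ with algebras in $\clU$ described in the paragraph preceding the corollary (and made precise by Proposition \ref{prop-oneone}), the point $X$ is identified with $\Lambda$, giving $\Lambda \in \GrAlg(\cT)$. There is essentially no obstacle here: the corollary is a direct bookkeeping consequence of the preceding proposition, and the only subtlety worth spelling out is that $\cT$ really is the correct minimal generating set of $\langle\tip(I)\rangle$ (which is where strong Koszulity enters, via the quadratic tips supplied by Proposition \ref{prop-ska}).
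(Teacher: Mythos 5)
Your proof is correct and takes essentially the same route as the paper's: identify $\cT$ with $\tip(\cG)$ for the reduced quadratic uniform \grb basis $\cG$ (Proposition \ref{prop-ska} plus uniqueness of the minimal generating set of the monomial ideal $\langle\tip(I)\rangle$), and then realize $\Lambda$ as the point of $\GrAlg(\cT)$ supplied by Proposition \ref{prop-oneone}. You merely spell out the bookkeeping that the paper compresses into ``it is now clear that $\Lambda\in\GrAlg(\cT)$.''
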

\begin{proof} The reduced \grb basis $\cG$ of $I$ with respect to $\succ$ is composed
of uniform quadratic elements.   Let $\cT=\tip(\cG)$.  It is immediate that
$\cT$ is the minimal set of paths that generate $\langle \tip(I)\rangle$.  It is now clear that
$\Lambda\in   \GrAlg(\cT)$.
\end{proof}
We now state the correspondence theorem.

\begin{theorem}\label{thm-one}Let $\cT$ be a set of paths of length 2.
There is a one-to-one correspondence between the
points of $\GrAlg(\cT)$ and  the
algebras $K\cQ/I$ that are strong Koszul algebras (with respect to $I$ and $\succ$) such
that $\langle \cT\rangle=\langle \tip(I)\rangle$.
\end{theorem}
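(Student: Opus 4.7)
My approach is to exhibit mutually inverse maps $\Phi:\GrAlg(\cT)\to\clU$ given by $X\mapsto K\cQ/\langle\cG(X)\rangle$, and $\Psi:\clU\to\GrAlg(\cT)$ coming directly from Proposition \ref{prop-oneone}, which associates to each $K\cQ/I\in\clU$ the unique $X\in\cA$ for which the reduced \grb basis of $I$ equals $\cG(X)$. Since that proposition already provides both existence and uniqueness of $X$, the genuine content of the theorem is to verify that $\Phi$ actually lands in $\clU$, i.e., that for every $X\in\GrAlg(\cT)$ the ideal $I=\langle\cG(X)\rangle$ satisfies $\langle\tip(I)\rangle=\langle\cT\rangle$.

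To prove that equality, I would fix $X\in\GrAlg(\cT)$ and let $\cG'$ be the reduced \grb basis of $I$, which by Proposition \ref{prop-ska} consists of quadratic uniform elements; set $\cT'=\tip(\cG')$. Every $g_t\in\cG(X)$ lies in $I$ with $\tip(g_t)=t$ (because $t\succ n$ for each $n\in\cN_2(t)$), so $\cT\subseteq\tip(I)\subseteq\langle\cT'\rangle$; since $\cT$ and $\cT'$ consist only of length-2 paths, Proposition \ref{prop-mono} forces $\cT\subseteq\cT'$. For the reverse inclusion, given $g'\in\cG'$ I would exploit length homogeneity: because $g'$ has degree 2 and each element of $\cG(X)$ has degree 2, one can write $g'=\sum_{t\in\cT}c_tg_t$ for scalars $c_t\in K$ (the only factors $r,s$ that fit are vertices). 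Expanding $g_t=t-\sum_{n\in\cN_2(t)}x_{t,n}n$ and using the crucial disjointness $\cT\cap\cN_2=\emptyset$, which is immediate from $\cN_2\subseteq\cB\setminus\tip(\langle\cT\rangle)$, the coefficient of each $t\in\cT$ in $g'$ is exactly $c_t$; hence $\tip(g')$ equals the $\succ$-largest $t$ with $c_t\ne 0$ and therefore lies in $\cT$. This establishes $\cT'\subseteq\cT$.

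Once this equality is in hand, the rest is routine. Since $\cG(X)$ consists of length-homogeneous uniform elements and $\tip(\cG(X))=\cT$ generates $\langle\tip(I)\rangle$, it is a \grb basis of $I$; as each tail lies in $\Span_K(\cN_2)\subseteq\Span_K(\nontip(I))$, it is in fact the reduced \grb basis. So Proposition \ref{prop-oneone} recovers the same $X$, yielding $\Psi\circ\Phi=\id$, while $\Phi\circ\Psi=\id$ is tautological; and $\Psi$ does land in $\GrAlg(\cT)$ because the corresponding algebra is strong Koszul by hypothesis. The main obstacle is precisely the step $\cT'\subseteq\cT$, which is the only argument that truly uses the combinatorial structure of $\cG(X)$ rather than mere \grb bookkeeping; the essential inputs there are the order condition $t\succ n$ for $n\in\cN_2(t)$ and the disjointness $\cT\cap\cN_2=\emptyset$, which together prevent any cancellation of leading terms that might produce a tip outside $\cT$.
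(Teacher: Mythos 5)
Your proof is correct, and its skeleton---the bijection $X\mapsto K\cQ/\langle\cG(X)\rangle$, with the inverse supplied by Proposition \ref{prop-oneone} and injectivity by uniqueness of reduced \grb bases---is the same as the paper's. The genuine difference is where the effort goes. The paper's proof simply asserts that $\varphi$ is well-defined and spends its lines on injectivity and surjectivity; implicitly it reads the definition of $\GrAlg(\cT)$ as saying that $\cG(X)$ itself is a (reduced) \grb basis of $\langle\cG(X)\rangle$, the reading it later invokes in the proof of Theorem \ref{thm-var}. You instead take the definition literally ($\langle\cG(X)\rangle$ merely admits \emph{some} quadratic \grb basis) and prove that this already forces $\langle\tip(\langle\cG(X)\rangle)\rangle=\langle\cT\rangle$ and that $\cG(X)$ is then the reduced \grb basis: your decomposition $g'=\sum_{t\in\cT}c_tg_t$ (valid by degree-2 homogeneity and uniformity), together with $\cT\cap\cN_2=\emptyset$ and $t\succ n$ for $n\in\cN_2(t)$, correctly rules out any cancellation of leading terms, giving $\tip(g')\in\cT$; the reverse inclusion $\cT\subseteq\cT'$ via Proposition \ref{prop-mono} and the length-2 hypothesis also checks out. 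What your route buys is a reconciliation of the two possible readings of the definition of $\GrAlg(\cT)$, a point the paper leaves tacit but which is needed for the proofs of Theorems \ref{thm-one} and \ref{thm-var} to fit together; what the paper's route buys is brevity, since under its reading well-definedness is immediate from $\tip(\cG(X))=\cT$.
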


\begin{proof}
Define $\varphi\colon \GrAlg(\cT)\to \clU$ by $\varphi(X)=KQ/\langle \cG(X)\rangle$.  The
map $\varphi$ is well-defined. We see that $\varphi$ is injective, since if 
$X=(x_{t,n}),X'=(x'_{t,n})\in\GrAlg(\cT)$
with $X\ne X'$, the reduced \grb bases of $\varphi(X)$ and $\varphi(X')$ differ.  But
the reduced  \grb basis of an ideal is unique, and $\varphi$ being
injective follows.

To see that $\varphi$ is onto, let $K\cQ/I\in\clU$.  We are assuming that
$\langle \tip(I)\rangle=\langle \cT\rangle$.  Since every path in $\cT$ is
a path of length 2 and $\langle \tip(I)\rangle=\langle\cT\rangle$, 
we see that $\cT$ is the (unique) minimal set of  paths
that generate $\langle \tip(I)\rangle$.  By the construction of the reduced
\grb basis for $I$ with respect to $\succ$ found in Section \ref{sec-strong}, the
reduced \grb basis for $I$ with respect to $\succ$ is
$\{g_t\mid t\in\cT \text{ and }g_t =t-\sum_{n\in\cN_2(\cT)}x_{t,n}n\}$ for
some $x_{t,n}\in K$.  Thus, $K\cQ/I=\varphi((x_{t,n}))$, and we are done.

\end{proof}

We now show the somewhat surprising result that $\GrAlg(\cT)$ is an affine algebraic variety  in $\cA$.

\begin{theorem}\label{thm-var} Let $K$ be a field, $\cQ$ a finite quiver, and $\cT$ be a
set of paths of length 2 in $\cQ$.  Then
$\GrAlg(\cT)$ is an affine algebraic variety.
\end{theorem}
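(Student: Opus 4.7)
The plan is to exhibit $\GrAlg(\cT)$ as the common zero locus in $\cA$ of an explicit finite family of polynomials in the coordinates $x_{t,n}$, using the standard overlap criterion for Gr\"obner bases in path algebras.

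First I would translate the defining condition into Gr\"obner basis language. Each $g_t = t - \sum_{n \in \cN_2(t)} x_{t,n}\, n$ is length-homogeneous of length $2$, uniform, and has $\tip(g_t) = t$ (since $t \succ n$ for each $n \in \cN_2(t)$). The length-$2$ component of the ideal $\langle \cG(X)\rangle$ is spanned by the $g_t$, so the length-$2$ tips of the ideal are exactly $\cT$; any additional tips must occur in length $\ge 3$. By Proposition \ref{prop-ska}, $K\cQ/\langle\cG(X)\rangle$ is strong Koszul if and only if its reduced Gr\"obner basis is quadratic, and uniqueness of the reduced basis then forces it to coincide with $\cG(X)$. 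Equivalently, $X \in \GrAlg(\cT)$ if and only if $\cG(X)$ is itself a Gr\"obner basis of $\langle \cG(X)\rangle$ with respect to $\succ$.

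Next I would invoke the overlap (``Composition'') criterion of \cite{Gr2}: $\cG(X)$ is a Gr\"obner basis if and only if every S-element arising from an overlap of two tips reduces to zero modulo $\cG(X)$. Since each $t \in \cT$ has length $2$, the only overlaps are the length-$3$ paths $w = abc$, with $a,b,c \in \cQ_1$ and $ab, bc \in \cT$, and the corresponding S-element is
\[
s_w(X) \;=\; g_{ab}\, c - a\, g_{bc} \;=\; a\!\!\sum_{n' \in \cN_2(bc)}\!\!x_{bc,n'}\, n' \;-\; \sum_{n \in \cN_2(ab)}\!\!x_{ab,n}\, n c,
\]
a sum of length-$3$ monomials whose coefficients are linear in the $x_{t,n}$. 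I would then reduce $s_w(X)$ symbolically: fix a deterministic reduction strategy and, at each step, replace a term $\alpha \cdot u t' v$ (with $t' \in \cT$ and $\ell(u)+\ell(v)=1$) by $\alpha \sum_{n \in \cN_2(t')} x_{t',n}\, u n v$. All intermediate polynomials are supported on length-$3$ paths, the tip strictly decreases in the well-order $\succ$, and the coefficients remain polynomials in the $x_{t,n}$ of bounded total degree. The process terminates in a symbolic normal form $r_w(X) = \sum_p r_{w,p}(X)\, p$ supported on length-$3$ paths $p \in \cN$, with each $r_{w,p} \in K[x_{t,n}]$.

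For any $X_0 \in \cA$, specialization of the symbolic reduction at $X_0$ realizes the actual reduction of $s_w(X_0)$ by $\cG(X_0)$; hence $s_w$ reduces to zero at $X_0$ if and only if $r_{w,p}(X_0)=0$ for every $p$. Since there are finitely many overlaps $w$ (at most $|\cT|^2$) and finitely many path-coefficients per $r_w$, the family $\{r_{w,p}\}$ is a finite subset of $K[x_{t,n}]$ whose common zero locus is $\GrAlg(\cT)$, which is therefore an affine algebraic variety. The main obstacle is justifying that the symbolic reduction genuinely produces polynomials commuting with specialization: commutation holds because each elementary reduction step is the $K[x_{t,n}]$-linear substitution $\alpha \cdot u t' v \mapsto \alpha \sum_n x_{t',n}\, u n v$, and polynomiality of the output follows because the number of reduction steps is bounded (by the finite set of length-$3$ paths in $\cB$), giving a uniform degree bound on all $r_{w,p}$.
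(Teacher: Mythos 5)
Your proposal is correct and follows essentially the same route as the paper: both reformulate membership in $\GrAlg(\cT)$ as ``$\cG(X)$ is itself a Gr\"obner basis,'' invoke the noncommutative Buchberger/overlap criterion, reduce the finitely many length-$3$ overlap relations symbolically with coefficients in $K[x_{t,n}]$ (the paper packages this as the ring $K\cQ[\mathbf y]$), and take the coefficients of the resulting normal forms as the defining polynomials. Your explicit attention to why specialization commutes with the symbolic reduction, and to termination/degree bounds, makes precise points the paper leaves implicit, but the argument is the same.
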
 

Before proving this result, we will need some preliminary work.

We introduce a ``polynomial ring over a path algebra".    Let $\mathbf y$ be a
set of $D$ variables with $\{y_{t,n}\}=\mathbf y$ where $t\in \cT$ and $n\in\cN_2(t)$.
Consider the ring $R=K\cQ[\mathbf y]$, consisting of finite sums
of the form $\sum_{p\in\cB}f_p(\mathbf y)p$, where $f_p(\mathbf y)$ is a polynomial in
the commutative polynomial ring $K[\mathbf y]$.  The variables $ y_{t,n}$ commute with
elements   of the path algebra $K\cQ$ (and each other).  Note that $K[\mathbf y]$ is the coordinate ring
of the affine space $\cA$. Given an element $\sum_{p\in\cB}f_p(\mathbf y)p\in R$,  we call
the polynomial $f_p(\mathbf y)$ the `coefficient' of  $p$

We are interested in a particular set of elements in $R$, namely
\[\cH=\{h_t\in R\mid  h_t=t-\sum_{n\in\cN_2(t)}y_{t,n}n\}.\]
  If  $F=\sum_{p\in\cB}f_p(\mathbf y)p$ is an element
of $R$, then we say $F'\in R$ is a \emph{simple reduction of $F$ by $\cH$}, written $F\to_{\cH}F'$, if
there is some $p\in\cB$ and $t\in \cT$ such that
\begin{enumerate}
\item $p=qtr$ for some paths $q$ and $r$,
\item $f_p(\mathbf  y)\ne 0$, and
\item $F'= F-f_p(\mathbf y)p  + (f_p(\mathbf y)(q(\sum_{n\in\cN_2(t)}y_{n,t}n)r)$.
\end{enumerate}

The effect of a simple reduction is the following.  Suppose $f_p(\mathbf y)p$ occurs in $F$ 
with $p=qtr$ and
it is the term we work with.  Then the term $f_p(\mathbf y)p$ is replaced with
the sum of terms $(f_p(\mathbf y)y_{n,t})qtr$, for $n\in\cN_2(t)$.  Note that  $p\succ qtr$
for each $n\in\cN_2(t)$.  Thus, for each $n\in\cN_2(t)$, $f_p(\mathbf y)y_{n,t}$ is added to 
$f_{qtr}(\mathbf y)$ as the 'coefficient' in front of $p=qtr$ and $f_p(\mathbf y)p$ is removed.
All other terms in $F$ are unchanged. 

 We say $F^*$ is a 
\emph{complete reduction
of $F$ by $\cH$}, written $F\Rightarrow_{\cH}F^*$, if, for some $m$,  there is a sequence
$F_1=F,F_2,\dots, F_m=F^*$ such that for each $i=1,\dots, m-1$,  $F_i\to_{\cH}F_{i+1}$
is a simple reduction, and $F^*$ has no simple reduction.  Note that $F^*$ having no simple
reduction is equivalent to saying that all the paths $p$ in $F^*$ having nonzero coefficient in
$K[\mathbf  y]$ are in $ \cN$; that is, for all $t\in\cT$, $t$ is not a subpath of any $p$ occuring
in $F^*$.  Since $\succ$ is a well-order on $\cB$, every $F\in R$ will have a complete reduction.

{\bf We now  prove Theorem \ref{thm-var}.} 
Recall that 
$\cH=\{h_t\in R\mid  h_t=t-\sum_{n\in\cN_2(t)}y_{t,n}n\}$.
 For each pair  $t$ and $t'$ of elements of $\cT$ such
that $t=ab$ and $t'=bc$,
where $a,b$, and $c$ are arrows in $\cQ$, form the \emph{overlap relation}
\[ Ov(t,{t'})=h_t\cdot c-a\cdot h_{t'}.\]
Note that $t=t'=a^2$ is allowed. 
Since each $h_t$ is a $K[\mathbf y]$-combination of paths of length 2, each
overlap relation is a $K[\mathbf y]$-combination of paths of length 3.  We note that
if we have a $K[\mathbf y]$-combination of paths of length 3, then a simple reduction
is again a $K[\mathbf y]$-combination of paths of length 3.   It follows that
a complete reduction of a $K[\mathbf y]$-combination of paths of length 3 is again
a $K[\mathbf y]$-combination of paths of length 3.

Let $\cN_3$ denote the set of paths in $\cN$ of length 3.
For each $Ov(t,{t'})$, let
\[ F^*_{t,t'}=\sum_{\hat n\in\cN_3}f^*_{t,t',\hat n}(\mathbf y)\hat n,\] with
$f^*_{t,t',\hat n}(\mathbf y)\in K[\mathbf y]$, be a complete reduction of $Ov(t,{t'})$ by
$\cH$.  Thus, for each $t=ab$, $t'=bc$ and each $\hat n\in\cN_3$, we obtain polynomials in 
commutative polynomial ring $K[\mathbf y]$,  namely, the coefficient 
$f^*_{t,t',\hat n}(\mathbf y)$ of $\hat n$ in $F^*_{t,t'}$. 

We claim that $\GrAlg(\cT)$ is the zero set of \[
\cI=\{f^*_{t,t',\hat n}(\mathbf y)\mid \hat n\in\cN_3, t,t'\in \cT\text{ with }
t=ab\text{ and }t'=bc,\text{ for some arrows }a,b,c\}.\]

If, in the definitions of overlap relation, simple reduction, and complete reduction, instead of variables $y_{t,n}$ we use elements $x_{t,n}$ in $K$, we would have the definitions of overlap
relation, simple reduction, and complete reduction for elements of the
path algebra $K\cQ$.   The noncommutative version of Buchberger's Theorem \cite{Gr2,B},
applied to our setup, says
that if $\cG=\{g_t\mid t\in\cT\}$ is a uniform set of quadratic elements
in $K\cQ$ such that $\tip(g_t)=t$,
then $\cG$ is a \grb basis for $\langle \cG\rangle$ if and only if all overlap relations completely
reduce to $0$.   

 Suppose that  $X=(x_{t,n})\in\GrAlg(\cT)$.  We show that $X$ is in the zero set  of $\cI$.  We note that
$\cG(X)=\{g_t=t-\sum_{n\in\cN_2(t)}x_{t.n}n\mid t\in\cT \}$ is just $\cH$ evaluated at $X$.
Since $X\in \GrAlg(\cT)$, $\cG(X)$ is the reduced \grb basis for $\langle\cG(X)\rangle$ and
hence all
overlap relations of $\cG(X)$ reduce to 0.   Thus each $f^*_{t,t',\hat n}(X)=0$; in particular
$X$ is in the zero set of $\cI$.

Conversly, if  $X$ is in the zero set of $\cI $, each $f^*_{t,t',\hat n}(X)=0$.  Hence every
overlap relation of $\cG(X)$ completely reduces to 0, and we conclude that
$\cG(X)$ is a \grb basis of the ideal $\langle \cG(A)\rangle$.  Since $\tip(\cG(X))=\cT$
we see that $X\in \GrAlg(\cT)$.  This completes the proof.
\qed

\section{Properties of $\GrAlg(\cT)$}\label{sec-prop}
We begin with a general definition.

\begin{definition}\label{def-assoc}{\rm Given $\Lambda=K\cQ/I$, an arbitrary algebra , 
$K\cQ/\langle \tip(I)\rangle$ is called the \emph{associated monomial algebra} of $\Lambda$ and
denoted $\Lambda_{Mon}$. We also define $I_{Mon}$ to be $\langle\tip(I)\rangle$.}
\end{definition}

Note that given an ideal $I$, $\tip(I)$ is dependent on the choice of 
the admissible order $\succ$ and that, in this
paper, $\succ$ is fixed and has the property that, if $p,q\in\cB$ and the length of $p$ is greater
than the length of $q$, then $p\succ q$.

The next result provides an alternative definition of $\GrAlg(\cT)$.  Recall that if
$\mathbf x =
(x_{t,n})\in \cA=K^D$, then $\cG(\mathbf x)=\{g_t=t-\sum_{n\in\cN_2(t)}x_{t,n}n\mid
t\in\cT\}$.

\begin{proposition}\label{prop-mono} Let $\cT$ be a set of paths of length 2 in a quiver $\cQ$.
\sloppy
Let $\mathbf 0=(0,0,\dots,0)\in \cA$. The following statements hold:
\begin{enumerate}

\item $\cG(\mathbf 0)=\cT$.
\item The element $\mathbf 0\in\cA$ is in $\GrAlg(\cT)$ and corresponds to the strong Koszul
algebra $K\cQ/\langle\cT\rangle$ (with respect to $\langle\cT\rangle$ and $\succ$).
\item Let $\Lambda=K\cQ/I$ be a length graded algebra.  Then $\Lambda$ 
is a strong Koszul algebra (with respect to $I$ and $\succ$)
corresponding to a point in $\GrAlg(\cT)$  if and only if $I_{Mon}=\langle\cT\rangle$.
\item If $I$ is an ideal in $K\cQ$ generated by length homogeneous elements, then $K\cQ/I$ corresponds
to an element in the zero set of $\cI$ if and only if $(K\cQ/I)_{Mon}
=K\cQ/(I_{Mon})=K\cQ/\langle\cT\rangle$.
\item There is exactly one algebra with quadratic monomial \grb basis that corresponds to a point in $\GrAlg(\cT)$,
namely, $K\cQ/\langle\cT\rangle$. \end{enumerate}

\qed
\end{proposition}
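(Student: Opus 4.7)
The plan is to derive all five parts from the machinery of Sections~\ref{sec-strong} and~\ref{sec-var}, in particular the construction of the reduced \grb basis, the correspondence Theorem~\ref{thm-one}, and the identification of $\GrAlg(\cT)$ with the zero set of $\cI$ given by Theorem~\ref{thm-var}. No single part requires a new idea; the work is to track which earlier result is doing the work at each step.

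For (1), I would substitute $\mathbf 0$ into the defining formula $g_t = t - \sum_{n \in \cN_2(t)} x_{t,n} n$: every coefficient vanishes, so $g_t = t$ and therefore $\cG(\mathbf 0) = \cT$. For (2), the paths in $\cT$ are uniform and length-homogeneous, and (as already noted in the introduction) $\cT$ is itself a \grb basis for $\langle \cT\rangle$ with respect to any admissible order. Hence $K\cQ/\langle \cT\rangle$ is strong Koszul, and by (1) it equals $K\cQ/\langle \cG(\mathbf 0)\rangle$; so $\mathbf 0 \in \GrAlg(\cT)$ and it corresponds to $K\cQ/\langle \cT\rangle$ under the bijection of Theorem~\ref{thm-one}.

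Part (3) is essentially a repackaging of Theorem~\ref{thm-one}. The key preliminary observation is that, because every $t \in \cT$ has length $2$, no element of $\cT$ can be a proper subpath of another, so $\cT$ is automatically the unique minimal set of paths generating the monomial ideal $\langle \cT\rangle$. One direction is then immediate from the definition of $\clU$: if $\Lambda = K\cQ/I$ corresponds to a point of $\GrAlg(\cT)$, then $I_{Mon} = \langle \tip(I)\rangle = \langle \cT\rangle$. For the converse, if $\Lambda$ is strong Koszul with $I_{Mon} = \langle \cT\rangle$, then the minimal generating set of $\langle \tip(I)\rangle$ is exactly $\cT$, and Corollary~\ref{cor-corres} places $\Lambda$ in $\GrAlg(\cT)$.

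For (4) the extra content is that a length-homogeneously generated ideal $I$ with $(K\cQ/I)_{Mon} = K\cQ/\langle \cT\rangle$ is automatically such that $K\cQ/I$ is strong Koszul, at which point (3) finishes the job. The argument I would give: by Proposition~\ref{prop-red} the reduced \grb basis of $I$ consists of uniform, length-homogeneous elements whose tips form the minimal generating set of $\langle \tip(I)\rangle = \langle \cT\rangle$; since those tips are length-$2$ paths and each basis element is length-homogeneous, each basis element is itself quadratic and uniform. Proposition~\ref{prop-ska} then gives strong Koszulity, and (3) delivers the equivalence with being a point in the zero set of $\cI$, with the converse following directly from (3). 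Finally for (5), if $X \in \GrAlg(\cT)$ and $\langle \cG(X)\rangle$ admits any \grb basis consisting of paths, then by uniqueness of the reduced \grb basis the reduced basis $\cG(X)$ must itself consist of paths, which forces each $x_{t,n}$ to vanish and hence $X = \mathbf 0$; the corresponding algebra is then $K\cQ/\langle \cT\rangle$ by (2). The only genuine (and minor) obstacle across the whole proposition is the length-homogeneity bookkeeping in (4), but this is already built into Proposition~\ref{prop-red}.
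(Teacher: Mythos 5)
Your proposal is correct: the paper itself declares this proposition ``straightforward and left to the reader,'' and your argument supplies exactly the intended details, using only the paper's own machinery (the reduced Gr\"obner basis construction of Proposition \ref{prop-red}, Propositions \ref{prop-ska} and \ref{prop-oneone}, Corollary \ref{cor-corres}, and Theorems \ref{thm-one} and \ref{thm-var}). The only place worth one extra line is part (5): ``uniqueness of the reduced Gr\"obner basis'' should be supplemented by the observation that a monomial Gr\"obner basis generates a monomial ideal, so each minimal tip generator $t$ lies in $I$ and its reduced form is $g_t=t-n(t)=t$, which is what forces $\cG(X)$ to consist of paths and hence $X=\mathbf 0$.
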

The proof is straightforward and left to the reader.  As a consequence, we have the
following corollary.

\begin{corollary}\label{cor-mono}Let $\Lambda=K\cQ/I$ be a $K$-algebra with  length
grading induced from the length grading of $K\cQ$ .  The following  statements are equivalent:
\begin{enumerate}
\item
$\Lambda$
corresponds to an element of $\GrAlg(\cT)$.
\item $\Lambda_{Mon}=K\cQ/\langle\cT\rangle$.
\item $I_{Mon}=\langle\cT\rangle$.
\end{enumerate}

\end{corollary}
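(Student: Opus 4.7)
The plan is to observe that this corollary is essentially a repackaging of parts (3) and (4) of Proposition \ref{prop-mono} combined with Theorem \ref{thm-var}, and to verify each implication carefully in a loop (1) $\Rightarrow$ (2) $\Rightarrow$ (3) $\Rightarrow$ (1). Before starting, I would note that by the very definition of the associated monomial algebra, $\Lambda_{Mon} = K\cQ/I_{Mon}$, and two ideals $I_{Mon}$ and $\langle\cT\rangle$ of $K\cQ$ yield equal quotient algebras (identified via the canonical surjection from $K\cQ$) if and only if they are equal as ideals. This gives (2) $\Leftrightarrow$ (3) at once.

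For (3) $\Rightarrow$ (1), I would appeal to Proposition \ref{prop-mono}(4), which says that for an ideal $I$ generated by length homogeneous elements, $K\cQ/I$ corresponds to an element in the zero set of $\cI$ precisely when $(K\cQ/I)_{Mon} = K\cQ/\langle\cT\rangle$. Combined with Theorem \ref{thm-var}, which identifies this zero set with $\GrAlg(\cT)$, we conclude that $\Lambda$ corresponds to an element of $\GrAlg(\cT)$. Conversely, for (1) $\Rightarrow$ (3), if $\Lambda$ corresponds to a point $X \in \GrAlg(\cT)$, then $\Lambda$ is by definition a strong Koszul algebra with reduced \grb basis $\cG(X)$, so $\tip(I) = \cT$ and hence $I_{Mon} = \langle \tip(I)\rangle = \langle\cT\rangle$.

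One subtle point worth flagging is the hypothesis that $\Lambda$ is length graded with $I$ generated by length homogeneous elements, which is needed to apply Proposition \ref{prop-mono}(4). A second subtle point is that the implication (3) $\Rightarrow$ (1) produces, a posteriori, the conclusion that $\Lambda$ is itself a strong Koszul algebra (since every point of $\GrAlg(\cT)$ corresponds to one); so the corollary secretly asserts that whenever $I_{Mon} = \langle\cT\rangle$ and $\cT$ consists of paths of length 2, $\Lambda$ must be strong Koszul. This is not an additional obstacle, but it is the nontrivial content already built into Proposition \ref{prop-mono}(4) and Theorem \ref{thm-var}.

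The main obstacle, such as it is, is purely bookkeeping: there is no new combinatorics or \grb computation to carry out beyond what the previous proposition and Theorem \ref{thm-var} already provide. The entire argument fits in a few lines once the correct citations are lined up, so I would present it compactly as a cycle of three implications and then remark on the translation $I_{Mon} = \langle\cT\rangle \Leftrightarrow \Lambda_{Mon} = K\cQ/\langle\cT\rangle$.
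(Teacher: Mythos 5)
Your proposal is correct and takes essentially the same route as the paper: the paper gives no separate argument, presenting the corollary exactly as you do, as an immediate consequence of Proposition \ref{prop-mono} (its parts (3) and (4)) together with Theorem \ref{thm-var}, plus the trivial translation between $I_{Mon}=\langle\cT\rangle$ and $\Lambda_{Mon}=K\cQ/\langle\cT\rangle$. One small correction to your (1) $\Rightarrow$ (3): the claim $\tip(I)=\cT$ is false ($\tip(I)$ contains every path having a subpath in $\cT$, by Proposition \ref{prop-tip-nontip}); what you need, and what the definition of a \grb basis gives directly, is $\tip(\cG(X))=\cT$ and hence $I_{Mon}=\langle\tip(I)\rangle=\langle\tip(\cG(X))\rangle=\langle\cT\rangle$.
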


The next result shows that two algebras in a variety have the same $K$-bases of paths
under the spliting 
of the canonical surjection $\pi\colon K\cQ\to K\cQ/I$
given by  the Fundamental Lemma.  More precisely,
let $\sigma\colon K\cQ/I\to K\cQ$ be defined by $\sigma(\pi(x))=n_x$ where
$x=i_x+n_x$ with $i_x\in I$ and $n_x\in\Span_K(\nontip(I))$.  The map $\sigma$ is
well-defined 
 by the Fundamental
Lemma, and $\pi\sigma=1_{K\cQ/I}$. We identify
$\Lambda = K\cQ/I$ with $\Span_K(\nontip(I))$.  If $\Lambda\in\GrAlg(\cT)$, then
$\nontip(I)=\cB\setminus \tip(I)$.  Now $\langle\tip(I)\rangle=\langle \cT\rangle$ and hence $\nontip(I)=\{n\in\cB\mid n \text{ has
no subpath in }\cT\}$ by Proposition \ref{prop-tip-nontip}.  Thus, every algebra in $\GrAlg(\cT)$ has as $K$-basis
$\{n\in\cB\mid n \text{ has
no subpath in }\cT\}$. 
Of course, multiplication of elements of the basis differs for different algebras.

The converse holds.  More precisely, if $\Lambda=K\cQ/I$, where $I$ is generated
by uniform quadratic elements and $\nontip(I)=
\{n\in\cB\mid n \text{ has
no subpath in }\cT\}$, then $\Lambda$ is in $\GrAlg(\cT)$.  To see this, since
$\cB=\tip(I)\oplus \nontip(I)$, it follows that $\tip(I)=\{p\in\cB\mid \text{ there
is some }t\in\cT \text{ such that }t\text{ is a subpath of }p\}$.  From this
description it follows that $\langle \tip(I)\rangle=\langle \cT\rangle$, and hence
$\Lambda\in\GrAlg(\cT)$.

We summarize the above discussion in the next result, which
provides another description of $\GrAlg(\cT)$.

\begin{theorem}\label{thm-cartan} Let $\cT$ be a set of paths of length 2 in $\cQ$ and
$\cN=\cB\setminus \tip(\langle\cT\rangle)$.  We have
that if $I$ is generated by uniform length homogeneous elements, then $\Lambda=K\cQ/I\in\GrAlg(\cT)$ if and only if $\cN=\nontip(I)$.  Moreover,  if  $\Lambda=K\cQ/I\in\GrAlg(\cT)$ then, as a subspace of $K\cQ$,
$\Lambda$ has $K$-basis $\cN$.\qed
\end{theorem}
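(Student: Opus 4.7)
The plan is to reduce both equivalences to Corollary \ref{cor-mono}, which characterises membership in $\GrAlg(\cT)$ by the condition $I_{Mon}=\langle\tip(I)\rangle=\langle\cT\rangle$. Set $L=\langle\cT\rangle$ and note that Proposition \ref{prop-tip-nontip}(1) gives $\cN=\nontip(L)$. The theorem therefore reduces to showing, for $I$ generated by length homogeneous uniform elements, that $\langle\tip(I)\rangle=L$ holds if and only if $\tip(I)=\tip(L)=L\cap\cB$.

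For the forward direction, assume $\Lambda\in\GrAlg(\cT)$. By Proposition \ref{prop-red}, the reduced \grb basis $\cG$ of $I$ exists and its tips form the unique minimal set of paths generating $\langle\tip(I)\rangle=L$. Since $L$ is generated by length-$2$ paths and no length-$2$ path is a proper subpath of another, this minimal generating set must be $\cT$ itself, whence $\cT=\tip(\cG)\subseteq\tip(I)$. The inclusion $\tip(I)\subseteq\langle\tip(I)\rangle\cap\cB=L\cap\cB=\tip(L)$ is automatic, so it only remains to show $\tip(L)\subseteq\tip(I)$. Given $p\in\tip(L)$, Proposition \ref{prop-tip-nontip}(1) provides a factorisation $p=qtr$ with $t\in\cT$; pick $x\in I$ with $\tip(x)=t$, and observe that $qxr\in I$ and $\tip(qxr)=q\tip(x)r=p$ by length admissibility of $\succ$. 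Hence $\tip(I)=\tip(L)$ and $\nontip(I)=\cN$.

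For the reverse direction, suppose $\nontip(I)=\cN$, so $\tip(I)=\cB\setminus\cN=\tip(L)$. Since the monomial ideal $L$ is generated by its set of paths $L\cap\cB=\tip(L)$, we obtain $\langle\tip(I)\rangle=\langle\tip(L)\rangle=L=\langle\cT\rangle$. The hypothesis that $I$ is generated by length homogeneous elements makes Corollary \ref{cor-mono} applicable, and it yields $\Lambda\in\GrAlg(\cT)$. The ``moreover'' assertion is then immediate from the Fundamental Lemma: $K\cQ=I\oplus\Span_K(\nontip(I))=I\oplus\Span_K(\cN)$, and the section $\sigma\colon\Lambda\to K\cQ$ discussed before the theorem identifies $\Lambda$ with $\Span_K(\cN)\subseteq K\cQ$, so $\cN$ is a $K$-basis of $\Lambda$ under this identification.

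The only step requiring any real care is the containment $\tip(L)\subseteq\tip(I)$ in the forward direction. This is precisely where the strong Koszul hypothesis enters: it is needed to guarantee the containment $\cT\subseteq\tip(I)$, after which length admissibility of $\succ$ lifts the ``subpath of some $t\in\cT$'' description of $\tip(L)$ to genuine tips of elements of $I$. Everything else is bookkeeping built on Corollary \ref{cor-mono} and the Fundamental Lemma.
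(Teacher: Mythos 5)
Your proof is correct and takes essentially the same route as the paper's own argument (the discussion preceding Theorem \ref{thm-cartan}): both reduce membership in $\GrAlg(\cT)$ to the condition $\langle\tip(I)\rangle=\langle\cT\rangle$, translate that into the tip-set equality $\tip(I)=\tip(\langle\cT\rangle)$ via Proposition \ref{prop-tip-nontip}, and obtain the basis statement from the Fundamental Lemma identification of $\Lambda$ with $\Span_K(\nontip(I))$. The only difference is that you make explicit two steps the paper leaves implicit---the lifting argument $qxr$ showing $\tip(\langle\cT\rangle)\subseteq\tip(I)$, and the appeal to Corollary \ref{cor-mono} (equivalently, Proposition \ref{prop-red}) to recover strong Koszulity in the reverse direction---which is added detail rather than a different method.
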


The following consequence of the previous  theorem describes the Cartan matrix of a strong
Koszul algebra and shows that two algebras in the same variety  have the same Cartan
matrix.

\begin{corollary}\label{cor-cartan}  Let $\cT$ and $\cN$ be as in Theorem \ref{thm-cartan}
and $\Lambda=K\cQ/I\in\GrAlg(\cT)$.  Suppose that $|\cN|<\infty$ and $\{v_1,\dots, v_n\}=\cQ_0$.
 Then the Cartan matrix of  $\Lambda$  is the $n\times n$ matrix $C$ where the $(i,j)$-th
entry in $C$ is $|v_i\cN v_j|$, the number of paths from $i$ to $j$ in $\cN$.
\end{corollary}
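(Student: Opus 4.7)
The plan is to read off the Cartan matrix directly from the explicit $K$-basis of $\Lambda$ that Theorem \ref{thm-cartan} provides. Recall that when $\Lambda$ is a finite-dimensional basic $K$-algebra with complete set of primitive orthogonal idempotents $\{v_1,\dots,v_n\}$ (the vertex idempotents of $K\cQ/I$ play this role), the $(i,j)$-entry of the Cartan matrix is $\dim_K(v_i\Lambda v_j)$, i.e.\ the composition multiplicity of the simple at $v_j$ in the indecomposable projective at $v_i$. So the task reduces to computing $\dim_K(v_i\Lambda v_j)$.

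By Theorem \ref{thm-cartan}, $\Lambda \in \GrAlg(\cT)$ has $\cN$ as a $K$-basis when identified via the splitting $\sigma$ of the Fundamental Lemma with $\Span_K(\nontip(I))\subseteq K\cQ$. The hypothesis $|\cN|<\infty$ ensures $\Lambda$ is indeed finite-dimensional, so the Cartan matrix is well defined. The key point, already observed in the excerpt, is that the decomposition $K\cQ = I \oplus \Span_K(\nontip(I))$ is a decomposition of $\cS$-bimodules, where $\cS$ is the subalgebra generated by the vertices. Consequently, the action of each $v_i$ on $\Lambda$ corresponds exactly to the action of $v_i$ on $\Span_K(\cN) \subseteq K\cQ$.

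I would then note that for a path $n \in \cB$ and vertices $v_i,v_j$, the product $v_i n v_j$ equals $n$ if $n$ starts at $v_i$ and ends at $v_j$, and is zero otherwise. Therefore the subspace $v_i\,\Span_K(\cN)\,v_j$ has $K$-basis precisely $v_i\cN v_j$, the set of paths in $\cN$ from $v_i$ to $v_j$. Transporting back via $\sigma$, we conclude $v_i \Lambda v_j$ has $K$-basis $v_i\cN v_j$ and hence $\dim_K(v_i\Lambda v_j) = |v_i\cN v_j|$, which is the desired formula.

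There is essentially no obstacle: once Theorem \ref{thm-cartan} has been established and one remembers that the Fundamental Lemma splitting is $\cS$-bilinear, the Cartan entries drop out by a direct bookkeeping of which basis paths in $\cN$ begin at $v_i$ and end at $v_j$. The only minor subtlety is to make explicit the convention (which the statement of the corollary fixes) that $|v_i\cN v_j|$ counts paths from $v_i$ to $v_j$; under this convention the formula $C_{ij} = \dim_K(v_i\Lambda v_j)$ matches the claim verbatim.
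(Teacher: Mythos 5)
Your proposal is correct and follows essentially the same route as the paper: both identify the Cartan entries as $\dim_K(v_i\Lambda v_j)$ and then use Theorem \ref{thm-cartan} together with the $\cS$-bimodule splitting of the Fundamental Lemma to identify $v_i\Lambda v_j$ with $\Span_K(v_i\cN v_j)$. Your write-up simply makes explicit the bookkeeping that the paper's two-line proof leaves implicit.
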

\begin{proof}   The
Cartan matrix is the $n\times n$ matrix with $(i,j)$-th entries $\dim_K(v_i\Lambda v_j)$.
But $\Span_K(\nontip(I))$ is isomorphic to $\Lambda$ and $\cN=\nontip(I)$

\end{proof}

The next result shows that algebras in the same variety share some homological properties.
Assume $I$ is an ideal in $K\cQ$ contained in $J^2$, where $J$ is the ideal in $K\cQ$ generated
by the arrows of $\cQ$.   If $v$ is a vertex in $\cQ$ and
$\Lambda=K\cQ/I$, we let $S_v(\Lambda)$ be the one-dimensional simple $\Lambda$-module
associated to the vertex $v$.

Note that if $\Lambda=K\cQ/I\in\GrAlg(\cT)$, $I\subseteq J^2$ since $I$ has a \grb bases consisting
of quadratic elements.  

If $\Lambda$ is a ring and $M$ is a $\Lambda$-module, we let $\pd_{\Lambda}(M)$
and $\id_{\Lambda}(M)$
denote the projective and injective dimensions of $M$ respectively.

\begin{theorem}\label{thm-ext}  Let $\cT$ be a set of    paths of length 2 in $\cQ$ and $\Lambda\in\GrAlg(\cT)$.
Suppose that $v$and $w$ are vertices in $\cQ$. Then for $n\ge 0$,
\[ \dim_K(\Ext_{\Lambda}^n(S_v(\Lambda),S_w(\Lambda))=
 \dim_K(\Ext_{\Lambda^*}^n(S_v(\Lambda^*),S_w(\Lambda^*)),
\]
where $\Lambda^*=K\cQ/\langle\cT\rangle$.  In particular, if $\cN=\cB\setminus \tip(\langle\cT\rangle)$, then
\[\pd_{\Lambda}(S_v(\Lambda))=\pd_{\Lambda^*}(S_v(\Lambda^*))
\text{ and }\id_{\Lambda}(S_v(\Lambda))=\id_{\Lambda^*}(S_v(\Lambda^*)).\]
.
\end{theorem}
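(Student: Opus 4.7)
The plan is to leverage Theorem~\ref{thm-cartan} together with Koszul-duality Hilbert series reciprocity, reducing the claimed equality of $\Ext$-dimensions to the already-established equality of graded Hilbert series. First, I would observe that by Theorem~\ref{thm-cartan} both $\Lambda$ and $\Lambda^*$ have $\cN = \cB \setminus \tip(\langle \cT\rangle)$ as $K$-basis, and since the identification with $\Span_K(\nontip(I))$ via the Fundamental Lemma respects the length grading and the $S$-bimodule structure ($S = K\cQ_0$), we obtain for every pair of vertices $v, w$ and every $n \ge 0$
\[
\dim_K(v\Lambda_n w) \;=\; |v\cN_n w| \;=\; \dim_K(v\Lambda^*_n w),
\]
where $\cN_n$ denotes the paths of length $n$ in $\cN$. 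Hence $\Lambda$ and $\Lambda^*$ have identical matrix-valued graded Hilbert series over $S$.

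By Theorem~\ref{thm-is-kz} both $\Lambda$ and $\Lambda^*$ are Koszul. For any Koszul algebra $\Gamma = K\cQ/I$ over the semisimple base $S$, the Hilbert series of the quadratic dual $\Gamma^!$ is determined by that of $\Gamma$ via the standard reciprocity
\[
H_\Gamma(t) \cdot H_{\Gamma^!}(-t)^T \;=\; I.
\]
Applied to both $\Lambda$ and $\Lambda^*$, the previous step forces $H_{\Lambda^!}(t) = H_{(\Lambda^*)^!}(t)$; componentwise, $\dim_K(v\Lambda^!_n w) = \dim_K(v(\Lambda^*)^!_n w)$ for all $v, w, n$. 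The Koszul-duality identification of $\Ext^n_\Lambda(S_v(\Lambda), S_w(\Lambda))$ with the appropriate $(v, w)$-component of $\Lambda^!_n$ (and similarly for $\Lambda^*$) now yields the asserted equality of $\Ext$-dimensions. The consequences for $\pd_\Lambda(S_v(\Lambda))$ and $\id_\Lambda(S_v(\Lambda))$ follow at once, since each is detected by the vanishing of $\Ext^n_\Lambda(S_v(\Lambda), -)$ or $\Ext^n_\Lambda(-, S_v(\Lambda))$ on the simple modules.

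The step requiring most care is invoking Koszul reciprocity and the $\Ext$-algebra identification in their matrix form over $S$ rather than the scalar form over a field; these are standard in the quiver Koszul literature but must be matched to the paper's vertex conventions. A more concrete alternative, closer in spirit to the \grb-theoretic emphasis of the paper, would construct the minimal projective resolution of each $S_v$ via an Anick/Bardzell-style overlap-chain resolution built directly from $\cT$: the ranks of its terms depend only on the combinatorics of $\cT$, and minimality for strong Koszul algebras ensures that the same ranks appear for both $\Lambda$ and $\Lambda^*$. Either approach delivers the theorem.
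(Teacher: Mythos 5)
Your proof is correct, but it takes a genuinely different route from the paper. The paper's own argument is constructive and \grb-theoretic: following the algorithmic approach of Green--Solberg, it builds the minimal projective resolution of $S_v(\Lambda)$ from sets $F^n\subseteq K\cQ$ whose tips are exactly the ``chains'' $T^n=\{a_1a_2\cdots a_n \mid va_1=a_1,\ a_ia_{i+1}\in\cT\}$; since $T^n$ depends only on $\cT$ and minimality follows by a degree argument, the Betti numbers of $S_v(\Lambda)$ and $S_v(\Lambda^*)$ coincide. Your primary route instead is numerical: you use Theorem \ref{thm-cartan} (upgraded, correctly, to a graded $S$-bimodule statement via the Fundamental Lemma, since $\Span_K(\nontip(I))$ is spanned by paths and $I$ is graded) to see that $\Lambda$ and $\Lambda^*$ have the same matrix-valued Hilbert series, and then invoke Koszulity (Theorem \ref{thm-is-kz}, noting $\Lambda^*$ is also strong Koszul as the point $\mathbf 0$ of $\GrAlg(\cT)$) together with the matrix form of Fr\"oberg/Koszul reciprocity, which says the graded Betti numbers of the simples over a Koszul algebra are determined entrywise by inverting the Hilbert matrix. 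This is sound: the Euler-characteristic identity from the linear minimal resolution makes $H_{E(\Lambda)}(-t)$ the inverse of $H_\Lambda(t)$, so equal Hilbert series force equal Ext dimensions, and the $\pd$/$\id$ statements follow as you say. What your approach buys is brevity and a more general principle (any two Koszul algebras in $\GrAlg(\cT)$, or indeed with the same vertex-graded Hilbert series, have the same Betti numbers); what it costs is reliance on the matrix-form duality over the semisimple vertex ring, with its transpose/opposite-algebra conventions, and it does not yield the explicit combinatorial description of the Ext spaces (the chain sets $T^n$) that the paper's construction provides. Note also that the ``more concrete alternative'' you sketch at the end -- an Anick/Bardzell-style resolution built from overlaps of $\cT$, with minimality for strong Koszul algebras -- is essentially the paper's actual proof.
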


\begin{proof}  Although the proof of this result is implicit in \cite{AG}, we 
sketch a proof employing the ideas in \cite{GS}.  In \cite{GS}, a projective $\Lambda$-resolution
is constructed inductively  from subsets $F^n=\{f^n_i\}_{i=1}^{k_i}$ for $n\ge 0$, where
 $f^n_i\in K\cQ$ and
  $k_i \text{ is finite}$ if
the reduced \grb basis is finite, which it is in our case.   
For $S_v(\Lambda)$, we have $F^0=\{v\}$, $F^1$ is the set of arrows starting at $v$, and
$F^2$ are the elements of a reduced \grb basis that start at $v$.   The $F^n$'s are constructed
using the $F^i$'s, $i<n$ and the reduced \grb basis. 

 We describe $\tip(F^n)$, for $n\ge 2$ which can be deduced from the construction of $F^n$
from $F^{n-1}$.
The construction shows that  $\tip(F^n)=T^n$, where
\[T^n=\{a_1a_2\cdots a_n\mid a_i\in\cQ_1, va_1=a_1, \text{ and }a_ia_{i+1}\in\cT,\text{ for }1\le i\le n-1\}.\]
Note that $T^n$ depends only on $\cT$,
and that  $|T^n|=|F^n|$.   Since the $f^n_i$ are  length homogeneous,
we see that if $f^n_i\in F^n$ and $\tip(f^n_i)=a_1\cdots a_n\in T^n$,  then
$f^n_i$ is length homogeneous of length $n$.  Since each $f^n_i$ is uniform, if $w$ is the end vertex of $a_n$, then $vf^{n}_iw=f^{n}_i$.

From the construction of the
$\{f^n_i\}$, 
each $f^n_i$ is a sum of elements of the form $f^{n-1}_jr_{i,j}$ with $r_{i,j}\in K\cQ$.   By length,
the $r_{i,j}$ are linear combinations of arrows.  Since the $r_{i,j}$ modulo $I$ are entries in the matrix
mapping the $n^{th}$ projective to the $n-1^{st}$ in the constructed projective resolution
of $S_v(\Lambda)$, we conclude that the resolution constructed in \cite{GS} is minimal in
our case.

Finally, we see that if 
$F^n$ is the set for resolving $S_v(\Lambda)$,
and ${F^*}^n$ is the set for resolving $S_v(\Lambda^*)$, then they both have tip
set $T^n$.   This finishes the proof since
the dimension of $\Ext_{\Lambda}^n(S_v(\Lambda),S_w(\Lambda))$ equals the
number of $f^n_i$s such that $vf^n_iw=f^n_i$.  
\end{proof}

We have the following consequence.

\begin{corollary}\label{cor-gldim}If $\cN=\cB\setminus \tip(\langle \cT\rangle)$ is a finite
set and
$\Lambda,\Lambda'\in\GrAlg(\cT)$, then 
\[\gldim(\Lambda)=\gldim(\Lambda').\]

\end{corollary}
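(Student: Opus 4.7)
The plan is to reduce the equality of global dimensions to the equality of projective dimensions of the simple modules, and then invoke Theorem \ref{thm-ext} directly. First I would note that since $\cN$ is finite and every algebra in $\GrAlg(\cT)$ has $\cN$ as its $K$-basis (by Theorem \ref{thm-cartan}), all algebras in $\GrAlg(\cT)$ are finite-dimensional. Moreover, every algebra $\Lambda = K\cQ/I$ in $\GrAlg(\cT)$ satisfies $I \subseteq J^2$, so $\Lambda$ is a basic finite-dimensional $K$-algebra whose complete set of simple modules is $\{S_v(\Lambda) \mid v \in \cQ_0\}$, each one-dimensional.

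Next I would invoke the standard fact that for such a finite-dimensional algebra
\[
\gldim(\Lambda) \;=\; \sup_{v \in \cQ_0} \pd_{\Lambda}(S_v(\Lambda)),
\]
since the global dimension of a finite-dimensional algebra is realized on its simples, and every simple here is of the form $S_v(\Lambda)$. The same identity holds for $\Lambda'$ and for $\Lambda^* = K\cQ/\langle \cT \rangle$.

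Now I would apply Theorem \ref{thm-ext}: for every vertex $v$,
\[
\pd_{\Lambda}(S_v(\Lambda)) \;=\; \pd_{\Lambda^*}(S_v(\Lambda^*)) \;=\; \pd_{\Lambda'}(S_v(\Lambda')).
\]
Taking the supremum over $v \in \cQ_0$ on all three sides yields $\gldim(\Lambda) = \gldim(\Lambda^*) = \gldim(\Lambda')$, which is the desired equality.

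There is essentially no hard step: the entire content is packaged in Theorem \ref{thm-ext}. The only point requiring a moment of care is confirming that $\gldim$ is computed on the simples — which uses finite-dimensionality (ensured by $|\cN| < \infty$) and the fact that $I \subseteq J^2$ so the vertex idempotents give a complete set of primitive orthogonal idempotents and the simples are exhausted by the $S_v$.
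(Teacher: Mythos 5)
Your proof is correct and matches the paper's own argument: both reduce $\gldim$ to the maximum of $\pd_{\Lambda}(S_v(\Lambda))$ over the vertices (using finite-dimensionality from $|\cN|<\infty$) and then invoke Theorem \ref{thm-ext}. Your additional remarks on $I\subseteq J^2$ and the simples being exhausted by the $S_v$ just make explicit what the paper leaves implicit.
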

\begin{proof} Let $\Lambda,\Lambda'\in\GrAlg(\cT)$.  Since $|\cN|=\dim_K(\Lambda)=
\dim_K(\Lambda')$, $\gldim(\Lambda)=\max_{v\in\cQ_0}\{\pd_{\Lambda}(S_v(\Lambda))$
and $\gldim(\Lambda')=\max_{v\in\cQ_0}\{\pd_{\Lambda'}(S_v(\Lambda'))\}$.  The result
follows from Theorem \ref{thm-ext}.

\end{proof}

If $\Lambda^*=K\cQ/\langle \cT\rangle$ is a finite
dimensional strong Koszul algebra (with respect to $\langle\cT\rangle$ and
$\succ$) and of finite global dimension, then the determinant of the Cartan matrix for 
every algebra in $\GrAlg(\cT)$ is $1$,  since every algebra in $\GrAlg(\cT)$ is  length graded and of finite global 
dimension and, hence,  we may apply \cite{W}.

The final property is one that is proved in a more general setting in \cite{GHS}.

\begin{theorem}\label{thm-qh}\cite{GHS} Let $\cT$ be set of paths of length 2 in $\cQ$ and
$\cN=\cB\setminus \tip(\langle \cT\rangle)$.  Assume that $\cN$ is a finite set.  If
$K\cQ/\langle\cT\rangle$ is a quasi-heredity algebra, then every algebra in $\GrAlg(\cT)$ is
quasi-heredity.
\end{theorem}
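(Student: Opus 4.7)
The plan is to transfer a heredity chain from $\Lambda^*=K\cQ/\langle\cT\rangle$ to an arbitrary $\Lambda=K\cQ/I\in\GrAlg(\cT)$, exploiting the fact that both algebras share the same primitive orthogonal vertex idempotents and the same $K$-basis $\cN$ by Theorem \ref{thm-cartan}.

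First I would use the quasi-heredity hypothesis on $\Lambda^*$ to fix an ordering $v_1,v_2,\dots,v_n$ of the vertices of $\cQ$ and the associated heredity chain $0=J_0^*\subset J_1^*\subset\cdots\subset J_n^*=\Lambda^*$ with $J_i^*=\Lambda^*(v_1+\cdots+v_i)\Lambda^*$, so that each $J_i^*/J_{i-1}^*$ is a heredity ideal of $\Lambda^*/J_{i-1}^*$. Because the vertices live in the subalgebra $\cS\subset K\cQ$ and descend unchanged to every quotient of $K\cQ$ by a graded ideal contained in $J^2$, the sums $v_1+\cdots+v_i$ produce an analogous candidate chain $0=J_0\subset J_1\subset\cdots\subset J_n=\Lambda$ with $J_i=\Lambda(v_1+\cdots+v_i)\Lambda$, and the task reduces to verifying that $J_i/J_{i-1}$ is a heredity ideal of $A_{i-1}:=\Lambda/J_{i-1}$ for each $i$.

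The three heredity conditions on $J_i/J_{i-1}$ are: (i) idempotency, (ii) semisimplicity of the corner algebra $\bar v_i A_{i-1}\bar v_i$, and (iii) projectivity of $J_i/J_{i-1}$ as a left $A_{i-1}$-module. Conditions (i) and (ii) I expect to transfer directly from $\Lambda^*$ by dimension comparison: Corollary \ref{cor-cartan} gives $\dim_K v_j\Lambda v_k=\dim_K v_j\Lambda^* v_k$ for all $j,k$, and these corner dimensions determine both the numerical shape of each $J_i$ (via paths in $\cN$ that pass through some $v_k$ with $k\le i$) and of each corner ring. Since the relevant equalities hold for $\Lambda^*$ by hypothesis, they should hold for $\Lambda$ as well.

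The main obstacle is condition (iii). The cleanest attack is to imitate the minimal projective resolution machinery of \cite{GS} that underlies the proof of Theorem \ref{thm-ext}: that construction produces a resolution whose syzygy tip sets are determined purely by $\cT$ and $\cN$, independent of the point of $\GrAlg(\cT)$. Applied inductively to the $A_{i-1}$-module $J_i/J_{i-1}$, the same combinatorial data that force projectivity over $\Lambda^*/J_{i-1}^*$ should force it over $A_{i-1}$. A technical subtlety is that the defining ideal of $A_{i-1}$ need not be quadratic, so the \grb-resolution machinery of \cite{GS} must be adapted to this iterated-quotient situation; this is where I would expect the real work of the proof to lie.
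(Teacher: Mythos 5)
You should first note that the paper itself contains no proof of this theorem: it is stated with the citation \cite{GHS} and the text says it "is proved in a more general setting" there, so your proposal has to stand entirely on its own. It does not, for two reasons. The decisive gap is your condition (iii): projectivity of $J_i/J_{i-1}$ as an $A_{i-1}$-module is the whole substance of quasi-heredity, and you explicitly leave it unproved ("where I would expect the real work of the proof to lie"). The route you sketch for it is also doubtful: after factoring out $J_1=\Lambda v_1\Lambda$, the algebra $A_1$ is no longer presented by a quadratic \grb basis (paths through $v_1$ become relations of unbounded length), it need not correspond to a point of any $\GrAlg(\cT')$, and your claim that the syzygy-tip data of \cite{GS} for the modules $J_i/J_{i-1}$ are "determined purely by $\cT$ and $\cN$" is an unproved assertion of essentially the same strength as the theorem itself.

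Second, your transfer of condition (ii) by "dimension comparison" fails: the dimension of $J_{i-1}=\Lambda(v_1+\cdots+v_{i-1})\Lambda$ is not a Cartan invariant. Concretely, take $\cQ$ with vertices $1,2,3,4$ and arrows $a\colon 2\to 1$, $b\colon 1\to 3$, $e\colon 2\to 4$, $f\colon 4\to 3$, order the arrows so that $ab\succ ef$, and let $\cT=\{ab\}$. There are no overlap relations, so $\GrAlg(\cT)$ is the affine line, with points $\Lambda_{\lambda}=K\cQ/\langle ab-\lambda ef\rangle$. All these algebras have the same Cartan matrix by Corollary \ref{cor-cartan}, yet $\dim_K\Lambda_{\lambda}v_1\Lambda_{\lambda}=4$ for $\lambda\ne 0$ (the ideal contains $\overline{ab}=\lambda\, \overline{ef}$), while $\dim_K\Lambda_{0}v_1\Lambda_{0}=3$. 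So corner dimensions do not determine the "numerical shape of each $J_i$", and your argument for semisimplicity of the corner collapses. Condition (ii) can in fact be rescued, but by a containment argument rather than a numerical one: $v_iJ^*_{i-1}v_i$ has as basis the positive-length nontip paths $v_i\to v_i$ that pass through some $v_l$ with $l<i$; quasi-heredity of $\Lambda^*$ (with a chain of the stated form) forces every positive-length nontip path $v_i\to v_i$ to be of this kind; and any such path factors as $xv_ly$ with $x,y\in\cN$, so its image in $\Lambda$ also lies in $J_{i-1}$ — here the inequality $\dim_K J_{i-1}\ge \dim_K J^*_{i-1}$, which is what actually holds, works in your favor. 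You would also need to invoke the standard fact (Dlab--Ringel) that a basic quasi-hereditary algebra admits a heredity chain of the form $\Lambda^*(v_1+\cdots+v_i)\Lambda^*$ for some ordering of the vertices. Even with (ii) repaired this way, the proposal remains an outline, because (iii), the heart of the matter, is missing.
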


\section{Examples}\label{sec-ex}

We begin by defining a particular length admissible order.  Unlike the commutative case, the (left)
lexicographic order is not a well-order on $\cB$.  In particular, the lexicographic order
will have infinite descending chains, contradicting that it must be a well-order.

We describe the \emph{length-left-lexicograpic order} which is length admissible. Arbitrarily
linearly order the vertices and arrows of $\cQ$  and require that every 
vertex is less than every arrow.  If $p=a_1a_2\cdots a_n$ and $q=b_1b_2\cdots b_m$
are paths with the $a_i$ and $b_j$  arrows, then $p\succ q$ if $n>m$ or
$n=m$ and there is $k$, $1\le k\le n$ such that for $1\le i\le k-1$, $a_i=b_i$ and
$a_k\succ b_k$.

In all the examples in this section we will use the length-left-lexicographic order and it will suffice,
in our setting, to simply
give the ordering of the arrows.   Recall that if $\cT$ is a set of paths of length 2 in
a quiver $\cQ$ and $\cG=\{g_t\mid t\in\cT\}$ where, for $t\in\cT$,
$g_t=t-\sum_{n\in\cN_2(t)}x_{t,n}n$ for $n\in\cN_2(t)$,  $x_{t,n}\in K$, then
$\cG$ is the reduced \grb basis for the ideal $\langle\cG\rangle$ if and only
if all overlap relations completely reduce to 0 by $\cG$. 

There are two extreme cases given $\cT$ and $\cG$ as above.  The first occurs
if there are no overlap relations.  In this case, the ideal of the variety $\GrAlg{\cT}$
is $(0)$ and hence $\GrAlg{\cT}$ is all of affine space and there are no
restrictions on the choice of the $x_{t,n}$.   The second case extreme case
occurs if $\cN_2(t)=\emptyset$, for all $t\in\cT$.  For example, this occurs
if for each $t\in\cT$, if $t\succ t'$ and $t'$ is a path of length 2, then
$t'\in \cT$ . In this case, the affine space $\cA$ is dimension 0 and 
 $\GrAlg(\cT)=\cA$ is a point, namely the monomial algebra
$K\cQ/\langle\cT\rangle$.

We now turn to specific examples.  The next example is designed to help understand
the proof of Theorem \ref{thm-var}. 

\begin{Example}\label{ex-gd}{\rm  Let $Q$ be the quiver
\xymatrix{
&&\circ \ar[ld]_a\ar[d]^b\ar[rd]^c\\
&\circ\ar[ldd]_e \ar[ddrrr]_f  &\circ \ar[ddll]_g\ar[drrd]^h        &\circ\ar[ddlll]^i\ar[ddr]^j\\\\
\circ\ar[drr]^k&&&&\circ\ar[dll]_l\\
&&\circ
} and
let $\succ$ be\linebreak  defined by $a\succ b\succ \cdots \succ l$.  Consider
$\cT=\{ af,ae,bg,bh, ek,gk,ik\}$.
Then, it follows that $\cN= \cQ_0\cup \{a,b,c,\dots,k,l,ci, cj, fl,hl,jl,cfk,cjl\}$. 
Thus we have $\cN_2(af)=\{cj\},   \cN_2(ae)=\{ci\},\cN_2(bg)=\{ci\},
\cN_2(bh)=\{cj\},\cN_2(ek) =\{fl \}, \cN_2(gk) = \{hl\},$ $ \cN_2(ik) =\{jl\}$.

Thus $\GrAlg(\cT)$ is a variety in $\cA=K^{7}$.
Simplify notation by renaming the variables $y_{t,n}$ where $t\in\cT$ and
$n\in \cN_2(t)$ as follows:
$ y_{af,cj}=X_1,\ y_{ae, cf}=X_2,\ y_{bg,ci}=X_3,\
 y_{bh,cj}=X_4,\ y_{ek,fl}=X_5,\ y_{gk,hl}=X_6,\ y_{ik,jl}=X_7$. 
We let
\[\cG=\{af-X_1cj,\ ae-X_2cf,\ bg-X_3ci, \dots, ik-X_7jl\}.\]

There are 2 overlap relations
$Ov(ae,ek)= -X_2cik+X_5afl$  and
$Ov(bg,gk)= -X_3cik +X_6bjl$.  We completely reduce the first
overlap relation and leave the computation of the second to the
reader.  We see that  $-X_2cik$  
simply reduces to $-X_7X_2cjl$ by $ik-X_7jl$.  Since $cjl\in\cN$ it 
has no simple reductions.  Next consider the second term
$X_5afl$.   Using $af-X_1cj$, $X_5afl$ simply reduces to $X_1X_5cjl$ and, as we noted,
$cjl\in\cN$.  Thus $Ov(ae,ek)$ completely reduces to
$-X_2X_7cjl + X_1X_5cjl$.   Similarly,
$Ov(bg,gk)\Rightarrow_{\cG} -X_3X_7cjl  +    X_4X_6cjl$.

Thus, the ideal of the variety is $\cI=\langle X_1X_5-X_2X_7,\ X_4X_6-X_3X_7\rangle$
in the commutative polynomial ring $K[X_1,X_2,\dots, X_7]$.  Note that using \cite{GHZ}
we see that the $\gldim(K\cQ/\langle \cT\rangle = 3$.  Thus by Corollary \ref{cor-gldim} and
Theorem \ref{thm-cartan},
every algebra in $\GrAlg(\cT)$ is a strong Koszul algebra of dimension 26 with $K$-basis
 $\cN$, and has global dimension 3.
\qed

}\end{Example}

We denote the free associative algebra in $n$ variables by $K\{x_1,\dots,x_n\}$.
Our next example is  very small and simple.  In this  example,
$\GrAlg(\cT)$ is affine 2-space and there is a punctured line in $\GrAlg(\cT)$ consisting
of  the
quantum affine planes.  Moreover there is another line in $\GrAlg(\cT)$ on which all
the algebras are isomorphic to the monomial algebra $K\{x, y\}/\langle \cT\rangle$.
This provides an example of distinct points in $\GrAlg(\cT)$ corresponding to
isomorphic algebras.

\begin{Example}\label{ex-poly2}{\rm Let $R =K\{x,y\}/\langle xy-yx\rangle$ and
$y\succ x$.  In this example, $\cQ$ has one vertex and two loops.
Then $\cG=\{yx-xy\}$ and $\cT=\{yx\}$.  The paths of length 2 are 
ordered $y^2\succ yx\succ xy\succ x^2$.
  We see that $\cN=\{x^iy^j\mid i,j\ge 0\}$.
$\cN_2(yx)=\{xy,x^2\}$.  Thus, $\GrAlg(\{yx\})$ lives in $\cA=K^2$.  There are no
overlap relations and hence $\GrAlg(\{yx\})=\cA$.  
Thus, every algebra of the form $\Lambda_{(\lambda,\gamma)}=
K\{x,y\}/\langle yx-\lambda xy-\gamma x^2\rangle$ is in $\GrAlg(\{yx\})$,
where $(\lambda,\gamma)\in K^2$.
Note that in the notation of Section \ref{sec-var},
$\lambda=y_{yx,xy}$ and $\gamma =y_{yx,x^2}$.

If $\gamma=0$ and $\lambda\ne 0,1$, then $\Lambda_{(\lambda,0)}$ is a quantum
affine plane.
They all lie on the punctured line 
$L=\{(\lambda,0)\mid \lambda\in K\setminus \{0,1\}\}$ in $\cA$.  We also have $\Lambda_{(1,0)}=R$, the commutative polynomial ring
in 2 variables.  Of course, $\Lambda_{(0,0)}$ is the (noncommutative) monomial algebra $K\{x,y\}/\langle yx\rangle$.  

On the other hand, the line determined by $\lambda=0$ consists of the algebras
$K\{x,y\}/\langle yx-\gamma x^2$.   It is not hard to show these algebras are
all isomorphic to each other.  In particular, they are all isomorphic to the monomial
algebra $K\{x,y\}/\langle yx\rangle$.

If both $\lambda$ and $\gamma$ are not 0, then other strong Koszul algebras occur.

Finally, for every $\Lambda\in\GrAlg(\{yx\})$  the minimal projective $\Lambda$-resolution
of $K$ looks like
\[0\to \Lambda\to \Lambda^2\to  \Lambda \to K\to 0.\]
\qed

}\end{Example} 

In the next small example, the algebras occuring are finite dimensional.

\begin{Example}{\rm
Again take $\cQ$ having one vertex and two loops, $x$ and $y$.  Again
we order $ y\succ x$.  Let $\cT=\{x^2,y^2, yx\}$.  Then $\cN=\{1,x,y, xy\}$
Hence $\cN_2(x^2)=\emptyset,\  \cN_2(y^2)=\{xy\}=\cN_2(yx)$.  Thus $\cA$ is
two space.  Now let $U$ and $V$ be variables.  We wish to find the ideal $\cI$ of
the variety $\GrAlg(\cT)$.  We have $\cG=\{x^2,y^2-Uxy,yx-Vxy\}$. Note that in
the notation of Section \ref{sec-var}, $U=y_{y^2,xy}$ and $V=
y_{yx,xy}$.  To find the polynomials
in $\cI$, we need to completely reduce all overlap relations.

There are 3 overlap relations
 \[Ov(x^ 2,x^2)=0, Ov(y^2,y^2)=Vxy^2-Vyxy, \text{ and } Ov(y^2,yx)=-Vxyx+Uyxy.\]
But there are no length 3 paths in $\cN$, hence the three overlap relations must
completly reduce to 0 by $\cG$. It follows that $\cI=\{0\}$.
Thus, $\cA=\GrAlg(\cT)$.  Note that for $V=1$ we have commutative strong Koszul
algebras, including $K\{x,y\}/\langle x^2,y^2, yx-xy\rangle$.
 \qed

}\end{Example} 

These examples are deceptively easy.  In general, $\GrAlg(\cT)$ is a proper nontrivial
variety.   The next example gives some indication of the complexity of the varieties.

\begin{Example}\label{ex-poly3}
{\rm Let $\cQ$ be the quiver with one vertex and 3 loops, $x,y$, and $z$.
Order them by $z\succ y\succ x$.  Let $\cT=\{ zy,zx,yx\}$.  Then $\cN=\{x^iy^jz^k\mid i,j,k\ge 0\}$.
We note that the commutative polynomial ring $K\{x,y,z\}/\langle zy-yz, zx-xz, yx-xy\rangle$ is a
strong Koszul algebra since $Ov(zy,yx)=-yzx+zyx$ completely reduces to 0 by $\cG=\{ zy-yz, zx-xz, yx-xy\}$.

By the results in Section \ref{sec-prop}, the strong Koszul algebras having basis $\cN$ (the same
as the commutative polynomial ring in 3 variables ) are the points in $\GrAlg(\cT)$.  The projective
resolution of $K$ over these algebras all have the same shape as the resolution of $K$ over the
commutative polynomial ring by Theorem \ref{thm-ext}.  Thus $\GrAlg(\cT)$ has some connection with Artin-Schelter regular
algebras of dimension 3.

Now $\cN_2=\{z^2,yz,y^2,xz,xy,   x^2\}$  and the length-lexicographic order yields
\[z^2\succ zy\succ zx\succ yz\succ y^2 \succ yx\succ 
xz\succ xy\succ x^2\]
Thus, $\cN_2(zy)=\{yz,y^2,xz,xy,x^2\}=\cN_2(zx)$ and
$\cN_2(yx)=\{xz,xy,x^2\}$.   Hence $\cA=K^{13}$ and $\GrAlg(\cT)$ is a variety
in $\cA$.

We let $A,B,\dots,H,L,M,N,P,Q$ denote 13 variables.
We set \begin{enumerate}
\item[] $g_{zy}=zy-Ayz-By^2-Cxz-Dxy-Ex^2$
\item[] $g_{zx}=zx-Fyz-Gy^2-Hxz-Kxy-Mx^2$
\item[] $g_{yx}=yx-Nxz-Pxy-Qx^2$.
\end{enumerate}
Let $\cG=\{g_{zy},g_{zx},g_{yx}\}$.

There is only one overlap relation we need to study, namely,
\[
Ov(zy,yx)=-Ayzx-By^2x-Cxzx-Dxyx-Ex^2+Nzxz+Pzxy+Qzx^2.\]
We completely reduce this overlap relation and collect the
polynomials that are the coefficients of elements of $\cN_3$.
By brute force computation, the ideal of $\GrAlg(\cT)$ contains
8 polynomials, 2 of total degree 3 and 6 of total degree 4.
 
Below is the polynomial that is the coefficient of $xyz$:
\[\begin{array}{l}
-AP-A^2LN -AFMN-BP-BPNA-BQNF-CF+NFP+N^2DA+N^EF\\
+NHA+P^2F+PDNA+PENF +PHA+QFP+BLNA+QMNF\\
+MGNP+MPNA+MQNF+MHF
\end{array}\]  The dimension of $\GrAlg(\cT)$ is unclear,    as is its irreducibility.
\qed

}
\end{Example}

\begin{remark}{\rm

The connection with Artin-Shelter regular algebras holds in all dimensions.  More precisely, consider
the commutative polynomail in $n$ variables $R=K\{x_1, x_2,\dots, x_n\}/
\langle x_jx_i-x_ix_j\mid 1\le i<j\le n\rangle $.  Taking  $x_n\succ x_{n-1}\succ\cdots\succ x_1$,
it  is easy to check that $R$ is a strong Koszul algebra.  Thus, if $\cT=\{x_jx_i\mid 1\le i<j\le n\}$
then $R\in\GrAlg(\cT)$ and, in this case ,
$\cN=\{x_1^{i_1}x_2^{i_2}\cdots x_n^{i_n} \mid i_j\ge 0, !\le i\le n\}$.
Thus $\GrAlg(\cT)$ consists of all the strong Koszul algebras with graded $K$-bases $\cN$.
Again, for each algebra in $\GrAlg(\cT)$, the simple module $K$ has a projective resolution
the same shape as the projective resolution of $K$ over $ R$ by Theorem \ref{thm-ext}. 
}\end{remark}

\section{Subvarieties}\label{sec-sub}
Fix  a quiver $\cQ$, a length admissible order $\succ$ on the set of paths $\cB$, and a
set $\cT$ of paths of length 2.    As usual, let $\cN=\cB\setminus \tip(\langle\cT
\rangle)$ and $D=\sum_{t\in \cT}\mid\cN_2(t)\mid$.   As we have
seen, $\GrAlg(\cT)$ is a variety in $\cA=K^D$ 
and the points of $\GrAlg(\cT)$ correspond to the strong Koszul algebras with a
fixed associated monomial algebra $\K\cQ/\langle \cT\rangle$.   In this section
we introduce subvarieties of $\GrAlg(\cT)$ that have a distinguished subalgebra
and which are  intersections of
$\GrAlg(\cT)$ with  specified affine subspaces.  

Let $\mathfrak r$ be a subset of
$\{(t,n)\mid t\in \cT \text{ and }n\in \cN_2(t)\}$ and $\psi\colon \mathfrak r\to K$.
We define $\GrAlg_{\psi}(\cT)$ to be the set of the strong Koszul algebras
$\Lambda=K\cQ/I$ (with respect to $I$ and $\succ$) such that the reduced \grb
basis of $I$,
$\cG=\{g_t\mid t\in\cT\}$, where  $g_t =t-\sum_{n\in\cN_2(t)}c_{t,n} n$ and satisfies the
restriction
that, for each $(t,n)\in\mathfrak r, c_{t,n}=\psi((t,n))$.

We break $\mathfrak r$ into two disjoint sets; namely,
Let $\mathfrak r^0=\{(t,n)\in\mathfrak r\mid \psi((t,n))= 0\}$ and $\mathfrak r^+=
\mathfrak r\setminus \mathfrak r^0$.
The \emph{distinguished
algebra} in $\GrAlg_{\psi}(\cT)$ is $\Lambda^*=K\cQ/I^*$ where $I^*$ is generated
by $\{g^*_t\mid t\in\cT \}$  where $g^*_t =t-\sum_{(t,n)\in\mathfrak r^+} \psi((t,n))n$.
Note that if $\mathfrak r^+=\emptyset$, then $g^*_t=t$ and $\Lambda^*=K\cQ/
\langle\cT\rangle$.    In general, $\Lambda^*$ may or may not be in
$\GrAlg_{\psi}(\cT)$, depending on whether or not $\Lambda^*$ is a strong
Koszul algebra.  Summarizing,
 $\GrAlg_{\psi}(\cT)$ are the strong Koszul algebras in $\GrAlg(\cT)$ having
reduced \grb bases  $\{t-\sum_{n\in\cN_2(\cT)}c_{t,n}n\}$ with $c_{t,n}=\psi((t,n))$
for all $(t,n)\in\mathfrak r\}$.   

    Let $\mathfrak A_{\psi}$ be the affine subspace of $\cA$
defined by \[\mathfrak A_{\psi} =\{(x_{t,n})\in \cA\mid x_{t,n}=\psi((t,n))\text{ if } (t,n)\in\mathfrak r\}\].    We have the following result, whose proof is left to the reader..

\begin{proposition}\label{prop-inter}Let $\cT$ be a set of paths of length 2 in
a quiver $\cQ$.  If $\mathfrak r\subseteq  \{(t,n)\mid t\in\cT, n\in\cN_2(t)\}$
and $\psi\colon \mathfrak r\to K$, then
\[ GrAlg_{\psi}(\cT)=\GrAlg(\cT)\cap \mathfrak A_{\psi}.\] Moreover,
$\dim_K(\mathfrak A_{\psi})=D-|\mathfrak r|$. \qed
\end{proposition}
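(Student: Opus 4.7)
The plan is to unpack the definitions on both sides of the claimed equality and observe that they coincide term by term. Recall that by Theorem \ref{thm-one}, the points of $\GrAlg(\cT)$ are in bijection with strong Koszul algebras $K\cQ/I$ (with respect to $I$ and $\succ$) satisfying $\langle \tip(I)\rangle = \langle \cT\rangle$, via the map sending $X = (x_{t,n}) \in \cA$ to $K\cQ/\langle \cG(X)\rangle$, where the reduced \grb basis is $\cG(X) = \{t - \sum_{n \in \cN_2(t)} x_{t,n} n \mid t \in \cT\}$. Under this identification, a strong Koszul algebra $\Lambda = K\cQ/I$ corresponds to the tuple whose coordinates $x_{t,n}$ are precisely the coefficients appearing in its reduced \grb basis; uniqueness of the reduced \grb basis (Proposition \ref{prop-oneone}) makes this assignment well-defined.

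First, I would verify the inclusion $\GrAlg_\psi(\cT) \subseteq \GrAlg(\cT) \cap \mathfrak A_\psi$. If $\Lambda = K\cQ/I \in \GrAlg_\psi(\cT)$, then by definition $\Lambda$ is a strong Koszul algebra with reduced \grb basis $\{t - \sum_{n \in \cN_2(t)} c_{t,n} n\}$ and $c_{t,n} = \psi((t,n))$ for every $(t,n) \in \mathfrak r$. Under the identification above, $\Lambda$ corresponds to $X = (c_{t,n}) \in \GrAlg(\cT)$, and the constraint $c_{t,n} = \psi((t,n))$ for $(t,n) \in \mathfrak r$ is exactly the defining condition for $X \in \mathfrak A_\psi$. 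Conversely, if $X = (x_{t,n}) \in \GrAlg(\cT) \cap \mathfrak A_\psi$, then $K\cQ/\langle \cG(X)\rangle$ is a strong Koszul algebra with reduced \grb basis $\cG(X)$, and the coordinates of $X$ satisfy $x_{t,n} = \psi((t,n))$ for $(t,n) \in \mathfrak r$; hence this algebra lies in $\GrAlg_\psi(\cT)$.

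For the dimension statement, I would note that $\mathfrak A_\psi$ is cut out of the affine space $\cA = K^D$ by the $|\mathfrak r|$ linear equations $x_{t,n} - \psi((t,n)) = 0$ indexed by $(t,n) \in \mathfrak r$. These equations are linearly independent because the coordinate functions $x_{t,n}$ on $\cA$ are themselves independent, so $\mathfrak A_\psi$ is an affine subspace of dimension $D - |\mathfrak r|$.

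There is no real obstacle here; the proposition is a straightforward bookkeeping exercise once one has Theorem \ref{thm-one} and the uniqueness of the reduced \grb basis in hand. The only point requiring care is making sure the bijection between points of $\GrAlg(\cT)$ and strong Koszul algebras is being applied correctly, so that the condition ``coefficient $c_{t,n}$ in the reduced \grb basis equals $\psi((t,n))$'' translates cleanly into ``coordinate $x_{t,n}$ of the corresponding point equals $\psi((t,n))$''.
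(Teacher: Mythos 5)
Your proof is correct and is exactly the argument the paper intends: the paper leaves this proposition to the reader precisely because it is the definitional bookkeeping you carry out, translating the coefficient constraints $c_{t,n}=\psi((t,n))$ on reduced Gr\"obner bases into coordinate constraints on points via the bijection of Theorem \ref{thm-one} and the uniqueness of the reduced Gr\"obner basis, plus the evident count for the dimension of the affine subspace $\mathfrak A_{\psi}$. Nothing is missing.
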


By the above Proposition,  $\GrAlg_{\psi}(\cT)$ could be viewed
as living in $K^{D- |\mathfrak r|}$.   In this setting 
the distinguished algebra is  the algebra associated to the point $\mathbf 0\in
 K^{D- |\mathfrak r|}$.  The next result provides another proof that
    $\GrAlg_{\psi}(\cT)$ is an affine variety in affine $(D-|\mathfrak r|)$-space
by explicitly describing the ideal of the variety, viewed as a variety in
$K^{|D|-|\mathfrak r|}$.

\begin{theorem}\label{thm-sub}
Keeping the notation above, $\GrAlg_{\psi}(\cT)$ is a subvariety   of $\GrAlg(\cT)$.
The subvariety   $\GrAlg_{\psi}(\cT)$ lives in affine space  $K^{D'}$, where $D'=(\sum_{t\in\cT} |\cN_2(t)|) -|\mathfrak r|$.
\end{theorem}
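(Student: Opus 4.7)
The plan is to combine Proposition \ref{prop-inter} with the explicit description of $\GrAlg(\cT)$ obtained in the proof of Theorem \ref{thm-var}. First, Proposition \ref{prop-inter} already identifies $\GrAlg_{\psi}(\cT) = \GrAlg(\cT) \cap \mathfrak A_{\psi}$. Since $\mathfrak A_{\psi}$ is cut out inside $\cA = K^D$ by the $|\mathfrak r|$ linear equations $x_{t,n} - \psi((t,n)) = 0$ for $(t,n)\in\mathfrak r$, it is itself an affine variety in $\cA$, so its intersection with the affine variety $\GrAlg(\cT)$ is automatically a subvariety of $\GrAlg(\cT)$. This gives the first assertion with no further work.

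For the second assertion, I would realize $\mathfrak A_{\psi}$ as affine $D'$-space by projecting onto the coordinates not indexed by $\mathfrak r$. Explicitly, let $\mathbf y' = \{y_{t,n}\mid (t,n)\notin\mathfrak r\}$ be a set of $D'$ variables, and define the substitution homomorphism $\sigma\colon K[\mathbf y]\to K[\mathbf y']$ by $\sigma(y_{t,n}) = \psi((t,n))$ if $(t,n)\in\mathfrak r$ and $\sigma(y_{t,n}) = y_{t,n}$ otherwise. Recall from the proof of Theorem \ref{thm-var} that $\GrAlg(\cT)$ is the zero set of the ideal generated by $\cI = \{f^*_{t,t',\hat n}(\mathbf y)\}$, obtained by completely reducing the overlap relations $Ov(t,t')$ of $\cH$. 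Setting $\cI' = \sigma(\cI) \subseteq K[\mathbf y']$, I expect $\GrAlg_{\psi}(\cT)$ to coincide with the zero set of $\cI'$ in $K^{D'}$.

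The verification is a direct unwinding of definitions. Given $X'\in K^{D'}$, extend it to $X\in \cA$ by placing $\psi((t,n))$ in the coordinates indexed by $\mathfrak r$. By construction, $X\in\mathfrak A_{\psi}$ and $f^*_{t,t',\hat n}(X) = \sigma(f^*_{t,t',\hat n})(X')$ for every overlap and every $\hat n\in\cN_3$. Thus $X'$ is a zero of $\cI'$ precisely when $X$ is a zero of $\cI$, i.e., precisely when $X\in\GrAlg(\cT)\cap\mathfrak A_{\psi}=\GrAlg_{\psi}(\cT)$ by Proposition \ref{prop-inter}. Noncommutative Buchberger (as invoked in the proof of Theorem \ref{thm-var}) then guarantees that such $X$ correspond exactly to strong Koszul algebras $K\cQ/\langle \cG(X)\rangle$ whose reduced \grb bases satisfy the prescribed values on $\mathfrak r$.

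There is no real obstacle here; the result is a bookkeeping consequence of Theorem \ref{thm-var} together with the standard observation that an affine variety intersected with a coordinate affine subspace is a variety in that subspace. The only point that requires care is to confirm that specializing the polynomials $f^*_{t,t',\hat n}$ through $\sigma$ yields the correct defining ideal for $\GrAlg_{\psi}(\cT)$ in the reduced coordinates, which, as noted above, is immediate from the fact that $\sigma$ is a ring homomorphism agreeing with the evaluation at the fixed coordinates of $\mathfrak A_{\psi}$.
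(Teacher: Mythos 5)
Your proposal is correct and takes essentially the same route as the paper: the paper's proof simply re-runs the proof of Theorem \ref{thm-var} with the variables $y_{t,n}$ for $(t,n)\in\mathfrak r$ replaced by the constants $\psi((t,n))$, which is exactly your specialization $\sigma$ combined with Proposition \ref{prop-inter}. The only cosmetic difference is that you reduce the overlap relations symbolically first and specialize afterwards, while the paper specializes first and then reduces; this is immaterial, since (as your zero-set argument shows) evaluation at a point of $\mathfrak A_{\psi}$ factors through $\sigma$.
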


\begin{proof}  The proof follows the proof of Theorem \ref{thm-var} after replacing
the variables $y_{t,n}$ with the constants $\psi((t,n))$ for $(t,n)\in \mathfrak r$. Thus
the polynomials in the ideal of the variety are produced by completely reducing the appropriate
overlap relations.  (See Example \ref{ex-poly3-1} below.)
The dimension of the underlying affine space is clear.
\end{proof}

\begin{Example}\label{ex-poly3-1}{\rm  Suppose we are interested in strong Koszul algebras
in the same variety as the commutative polynomial ring $R=K\{x,y,z\}/\langle
zy-yz,zx-xz,yx-xy\rangle$ with $z\succ y\succ x$.  We saw in Example \ref{ex-poly3} that
$\GrAlg(\cT)$ where $\cT=\{zy,zx,yx\}$ is extremely complicated.  We consider
the following subvariety. 

Let \sloppy
$\mathfrak r=\{(zy,yz),(zy,y^2),(zy,xz),(zy,x^2),(zx,yz),(zx,y^2),(zx,xz),(zx,x^2),(yx,xz),$
\linebreak $(yx,xy)\}$ and set
$\psi((zy,yz))=\psi((zx,xz))=\psi((yx,xy))=1$ with all other values of $\psi$ being 0.
Note that the distinguished algebra in $\GrAlg_{\psi}(\cT)$ is $R$, the commutative
polynomial ring in 3 variables..
There three $(t,n)$s not in $\mathfrak r$: $(zy,xy), (zx,xy), (yx,x^2)$.
Let $A=c_{zy,xy}, B=c_{zx,xy}$ and $C=c_{yx,x^2}$.  We have
\[\cG=\{ g_{zy}=zy-yz -Axy, g_{zx}=zx-xz+Bxy, g_{yx}=yx-xy+Cx^2\}.\]
It follows that $\GrAlg_{\psi}(\cT)$ is the variety whose points $(A,B,C)$ 
satisfy the property that $\cG$ is the reduced \grb for the ideal it generates
in $K\{x,y,z\}$.

To find ideal of the variety, we must completely reduce  every overlap relation .  As we
noted in Example \ref{ex-poly3}, there is only one overlap relation, namely,
\[ Ov(zy,yx)=-yzx-Axyx +zxy +Czx^2.\]
After completely reducing the overlap relation, the polynomials that are coefficients of $\cN_3$
generate $\cI$.  The reader can verify that we obtain two polynomials: $BC^2-AC$ and $BC$.
Thus, $\cI=\langle BC^2-AC, BC\rangle =\langle AC, BC\rangle$.
The variety $\GrAlg_{\mathfrak r}(\cT)$ in $K^3$ has two irreducible components
the plane $\{(A,B,0)\}$ and the line $\{(0,0,C)\}$. the commutative polynomial
ring lies in the plane $\{(A,B,0)\}$.

}\end{Example}

\section{Further results  on strong Koszul algebras}\label{sec-results}

We begin by looking at $\Lambda^{op}$, the opposite algebra of $\Lambda$.
The opposite algebra of $\Lambda$ is $\{\lambda^{op}\mid \lambda\in \Lambda\}$ with
addition and multiplication given by $\lambda^{op}+(\lambda')^{op}=(\lambda+\lambda')^{op}$
and $\lambda^{op}\cdot (\lambda')^{op}=(\lambda'\cdot\lambda)^{op}$.  It is well-known
that $\Lambda$ is a Koszul algebra if and only if $\Lambda^{op}$ is a Koszul algebra.
If $\succ$ is a length admissible order, then let $\succ^{op}$ be the order
$p^{op}\succ^{op}q^{op}$ if and only if $p\succ q$.  In particular, if $\succ$ is the
length-(left)-lexicographic order defined in Section \ref{sec-ex}, then $\succ^{op}$
is the length -(right)-lexicographic order. 

Given a quiver $\cQ$, define the opposite quiver, $\cQ^{op}$ in the obvious
way.  If $I$ is an ideal in
$K\cQ$, then let $I^{op}=\{x^{op}\mid x\in I\}$.    We have the following  result, whose proof
is left to the reader.

\begin{proposition}\label{prop-op} The algebra $\Lambda=K\cQ/I$ is a strong Koszul
algebra (with respect to $I$ and $\succ$) if and only if $\Lambda^{op}=K\cQ^{op}/I^{op}$
is a strong Koszul algebra (with respect to $I^{op}$ and $\succ^{op}$). \qed
\end{proposition}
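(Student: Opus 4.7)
The plan is to exploit the anti-isomorphism of $K$-algebras $\phi\colon K\cQ \to K\cQ^{op}$ sending $x \mapsto x^{op}$, which reverses the arrows in each path: $(a_1 a_2 \cdots a_n)^{op} = a_n^{op} \cdots a_1^{op}$. This map preserves path length, sends uniform elements to uniform elements (with source and target vertices interchanged), and restricts to a length-preserving bijection $\cB \to \cB^{op}$. It takes the ideal $I$ bijectively to $I^{op}$.

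First I would verify that $\succ^{op}$ is a length admissible order on the paths of $\cQ^{op}$. Using the identity $p^{op} r^{op} = (rp)^{op}$, admissibility condition (1) for $\succ^{op}$ is equivalent to condition (2) for $\succ$, and vice versa. Condition (3) is immediate from $p = rqt$ iff $p^{op} = t^{op} q^{op} r^{op}$. The well-order and length-compatibility properties transfer directly because $\ell(p) = \ell(p^{op})$.

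The central observation is the tip identity: for any nonzero $x = \sum_p \alpha_p p \in K\cQ$,
\[
\tip_{\succ^{op}}(x^{op}) = \tip_\succ(x)^{op},
\]
which is immediate from the definition of $\succ^{op}$. It follows that $\tip(I^{op}) = \tip(I)^{op}$. I would then prove the monomial-ideal lemma
\[
\langle \cT \rangle^{op} = \langle \cT^{op} \rangle
\]
for any set of paths $\cT$. This reduces, via Proposition~\ref{prop-mono}(1), to the fact that $s$ is a subpath of $p$ in $\cQ$ if and only if $s^{op}$ is a subpath of $p^{op}$ in $\cQ^{op}$.

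With these tools, the main implication reads as follows: if $\cG$ is a graded \grb basis of $I$ consisting of quadratic uniform elements, set $\cG^{op} = \{g^{op} : g \in \cG\}$. Its elements lie in $I^{op}$ and are quadratic and uniform because $\phi$ preserves length and uniformity. Chaining the two identities above,
\[
\langle \tip(\cG^{op}) \rangle = \langle \tip(\cG)^{op} \rangle = \langle \tip(\cG) \rangle^{op} = \langle \tip(I) \rangle^{op} = \langle \tip(I)^{op} \rangle = \langle \tip(I^{op}) \rangle,
\]
so $\cG^{op}$ is a \grb basis of $I^{op}$ with respect to $\succ^{op}$, hence $\Lambda^{op}$ is strong Koszul. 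The converse follows by symmetry: applying the same argument to $\Lambda^{op}$ recovers $\Lambda$, since $(\cQ^{op})^{op} = \cQ$, $(I^{op})^{op} = I$, and $(\succ^{op})^{op} = \succ$.

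No step presents a serious obstacle; the argument is essentially bookkeeping. The most delicate point is the monomial-ideal lemma, where one must keep track of how the subpath relation behaves under reversal and avoid confusing the anti-multiplicative identity $p^{op} r^{op} = (rp)^{op}$ with ordinary multiplication.
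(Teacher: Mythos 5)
Your proof is correct. The paper offers no proof here (it is explicitly left to the reader), and your argument --- transporting everything through the length- and uniformity-preserving anti-isomorphism $x\mapsto x^{op}$, verifying that $\succ^{op}$ is length admissible, and chaining the identities $\tip(I^{op})=\tip(I)^{op}$ and $\langle \cT\rangle^{op}=\langle \cT^{op}\rangle$ to show $\cG^{op}$ is a quadratic uniform \grb basis of $I^{op}$ --- is precisely the routine verification the paper intends.
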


The next result deals with tensoring two strong Koszul algebras.  In fact, we 
prove a  result about the reduced \grb basis of the tensor of two algebras in general; see
Theorem \ref{thm-tensor}(1) below. Let $\Lambda=K\cQ/I$
and $\Lambda'=K\cQ'/I'$.  Define $Q^*$ to be the quiver with vertex set
$\cQ_0\times \cQ_0'$ and arrow set 
$
(\cQ_1\times\cQ_0')\cup (\cQ_0\times \cQ'_1)
$,
where $(a,w')\colon (u,w')\to (v,w')$ if $a\colon u\to v$ and
$(v,b')\colon (v,w')\to (v,x')$ if $b'\colon w'\to x'$.  If $p=a_1a_2\cdots a_r$  and
$w'\in\cQ'_0$, then let $(p,w')$ denote the path $(a_1,w')(a_2,w')\cdots (a_r,w')$.
If $q'$ is a path in $\cQ'$ and $v\in\cQ_0$, then $(v,q')$ has a similar meaning.
If $r=\sum_{p\in\cB}\alpha_pp\in K\cQ$ and $w'\in\cQ'_0$, then let
 $(r,w)=\sum_{p\in\cB} \alpha_p(p,w')$.  Similarly, $(v,\sum_{q'}\beta_{q'}q')=
\sum_{q'}\beta_{q'}(v,q')$.

Define $\varphi\colon K\cQ^*\to \Lambda\otimes_K\Lambda'$ as follows. If $(v,w')\in\cQ^*_0$, $\varphi(v,w')=v\otimes w'$, if $(a,w') \in\cQ_1\times \cQ'_0$, $\varphi(a,w')=a\otimes w'$,  and
if $(v,b') \in\cQ_0\times \cQ'_1$, $\varphi(v,b')=\otimes b'$.  This ring homomorphism
is clearly surjective.  Let $I^*$ denote the kernel of this morphism.  
Finally, let $\succ$ and $\succ'$ be length admissible orders on $\cB$, the set of paths in $\cQ$, 
and
on $\cB'$, the set of paths in $\cQ'$, respectively. 

 Let $\succ^*$ be the length admissible order
on $\cB^*$, the set of paths in $\cQ^*$, be defined as follows: On vertices $(u,w')\succ^*
(v,x')$ if $w'\succ' x'$ or $w'=x'$ and $u\succ v$. 
Let $p^*$ be the path $(v_1,q'_1)(p_1, w_1')(v_2,q'_2)\cdots (p_n,w_n')$
and $\hat p^*$ be the path  $(\hat v_1,\hat q'_1)(\hat p_1,\hat w_1')(\hat v_2,\hat q'_2)\cdots (\hat p_m,\hat w_m')$ with $p_i,\hat p_\in\cB, q'_j,\hat q'_j\in \cB'$.  Remove
all the $\cQ^*$ vertices from $p^*$ and $\hat p^*$.  Then
$p^*\succ^* \hat p^*$ if $\ell(p^*)>\ell(\hat p^*)$ or $\ell(p^*)=\ell(\hat p^*)$
and the first arrows from the left in $p^*$ and $\hat p^*$ where the arrows differ, we have one of the following
4 cases: 
\begin{enumerate}
\item the arrow in $p$ is  $(v,b')$ and the arrow in $\hat p$ is $(u,b'')$
with $b',b''\in\cQ_1'$ and $b'\succ' b''$.
\item the arrow in $p$ is  $(v,b')$ and the arrow in $\hat p$ is $(a,w')$
with $b'\in \cQ'_1$ and $a\in \cQ_1$.
\item the arrow in $p$ is  $(a,w')$ and the arrow in $\hat p$ is $(a',w'')$ with
$a,a'\in\cQ_1$ and $a\succ a'$.
\item the arrow in $p$ is $(a,w')$ and the arrow in $\hat p$ is $(a,w'')$ with 
$a\in\cQ_1$ and $w'\succ' w''$.
\end{enumerate}
The reader may check that $\succ^*$ is a length admissible order.

Before getting to the result on tensors,  we let \[C=\{(u,b')(a,x')-(a,w')(v,b')\mid
a\colon u\to v\text{ is an arrow in }\cQ_1, \]\[ b'\colon w'\to x'\text{ is an arrow in }\cQ'_1\}.\]  We see that $C\subseteq I^*$.  Let $\cC$ be ideal
in $K\cC$ generated by $C$.  The elements of $C$ are called \emph{commutativity relations}.
Note that they are quadratic elements in $K\cQ^*$.  The proof of the following result
is left to the reader.

\begin{lemma}\label{lem-comm} Let $p^*$ be a path in $\cB^*$ of length at least 1.
Then there exist paths  $p\in \cB$, $q'\in\cB'$ such that
either $\ell(p)\ge 1$ or $\ell(q')\ge 1$ and
$p^*-(p,w')(v,q')\in \cC$, where $v\in\cQ_0$ is the end vertex of $p$ and
$w'\in\cQ'_0$ is the start vertex of $q'$ .  \qed
\end{lemma}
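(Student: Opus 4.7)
The plan is to use the commutativity relations in $C$ to ``sort'' the arrows of $p^*$ so that all arrows of $\cQ$-type precede all arrows of $\cQ'$-type, and then recognize the result as a path of the form $(p,w')(v,q')$. Write $p^*=\alpha_1\alpha_2\cdots\alpha_n$ as a concatenation of arrows and define an \emph{inversion} to be a pair of positions $i<j$ for which $\alpha_i$ has the form $(u,b')$ with $b'\in\cQ'_1$ and $\alpha_j$ has the form $(a,x')$ with $a\in\cQ_1$. I would induct on the total number of inversions of $p^*$.

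For the base case of zero inversions, no $\cQ$-arrow lies to the right of any $\cQ'$-arrow, so $p^*$ splits into an initial block of $\cQ$-arrows followed by a final block of $\cQ'$-arrows. Composability in $\cQ^*$ forces the second coordinates throughout the $\cQ$-block to be a single vertex $w'\in\cQ'_0$ and the first coordinates throughout the $\cQ'$-block to be a single vertex $v\in\cQ_0$. One then takes $p$ to be the concatenation of the arrows in the $\cQ$-block (a vertex of $\cQ$ if that block is empty) and $q'$ to be the concatenation of the arrows in the $\cQ'$-block (a vertex of $\cQ'$ if that block is empty); note that $v=t(p)$ and $w'=s(q')$. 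Then $p^*=(p,w')(v,q')$ on the nose (not merely modulo $\cC$), and since $\ell(p^*)\ge 1$ at least one block is nonempty, giving $\ell(p)\ge 1$ or $\ell(q')\ge 1$.

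For the inductive step, if $p^*$ has at least one inversion it has an \emph{adjacent} inversion, say $\alpha_i=(u,b')$ and $\alpha_{i+1}=(a,x')$ with $a\colon u\to v$ in $\cQ_1$ and $b'\colon w'\to x'$ in $\cQ'_1$. The commutativity relation
\[ (u,b')(a,x')-(a,w')(v,b')\in C\subseteq \cC, \]
multiplied on the left by $\alpha_1\cdots\alpha_{i-1}$ and on the right by $\alpha_{i+2}\cdots\alpha_n$, lies in $\cC$. Hence $p^*$ differs by an element of $\cC$ from the path $p^{**}$ obtained by replacing the subword $(u,b')(a,x')$ with $(a,w')(v,b')$. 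A direct count shows this swap strictly decreases the number of inversions by exactly one: the pair $(i,i+1)$ is destroyed, and the changes at pairs $(k,i),(k,i+1)$ with $k<i$ and at pairs $(i,j),(i+1,j)$ with $j>i+1$ cancel in pairs. Applying the inductive hypothesis to $p^{**}$ produces $p\in\cB$, $q'\in\cB'$ with $p^{**}-(p,w')(v,q')\in\cC$ and $\ell(p)\ge 1$ or $\ell(q')\ge 1$, and therefore $p^*-(p,w')(v,q')\in\cC$ as well.

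There is no genuine obstacle; the only subtlety is the degenerate base case in which one of the two blocks is empty, so that $p$ or $q'$ is a vertex rather than a path of positive length. The statement's disjunctive condition ``$\ell(p)\ge 1$ \emph{or} $\ell(q')\ge 1$'' is precisely what accommodates this possibility.
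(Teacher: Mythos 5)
Your proof is correct: the inversion-counting induction, the existence of an adjacent inversion, the composability bookkeeping forcing the second (resp.\ first) coordinates to be constant along the $\cQ$-block (resp.\ $\cQ'$-block), and the vertex convention in the degenerate base case are all handled properly. The paper itself leaves this lemma's proof to the reader, and your ``sort the arrows using the commutativity relations in $C$'' argument is precisely the intended one, so nothing further is needed.
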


We use the following convention: If $z$ is a uniform element in $K\cQ$ with $uzv=z$, $u,v\in\cQ_0$
 and $y'$ is a uniform element in $K\cQ'$ with$w'y'x'=y'$, then
we define $(z,y')\in K\cQ^*$ to be $( z,w')(v,y')$. Note that
$(z,w')(v,y')-(u,y')(z,x')$ is an element of $\cC$.
We also have the following lemma.

\begin{lemma}\label{lem-basis} Suppose that $\Lambda=K\cQ/I$, $\Lambda'=K\cQ'/I'$
are $K$-algebras, and $K\cQ^*$ and $I^*$ are  defined above.  Let $\succ$ and
$\succ'$ be length admissible orders for $\cB$ and $\cB'$, respectively.  Set
$\cN=\cB\setminus\tip(I)$ and $\cN'=\cB'\setminus\tip(I')$.  Then the
set $\{n\otimes_Kn'\mid n\in\cN, n'\in\cN'\}$ is a $K$-basis for
$\Lambda\otimes_K\Lambda'$.   
\end{lemma}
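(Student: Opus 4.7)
The plan is to reduce the statement to the Fundamental Lemma (Lemma \ref{lem-fund}) together with the standard fact that a tensor product over a field of two vector spaces has as basis the pairwise tensors of chosen bases. Since the tensor product $\Lambda\otimes_K\Lambda'$ in the statement is taken over the field $K$, no bimodule subtlety intrudes; we only need $K$-bases of each factor.

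First I would apply the Fundamental Lemma to the ideal $I\subseteq K\cQ$ to obtain the $S$-bimodule decomposition $K\cQ=I\oplus\Span_K(\cN)$, and similarly to $I'\subseteq K\cQ'$ to obtain $K\cQ'=I'\oplus\Span_K(\cN')$. Passing to quotients, the canonical projections give $K$-linear isomorphisms $\Lambda\cong\Span_K(\cN)$ and $\Lambda'\cong\Span_K(\cN')$, so the image of $\cN$ under the projection $\pi\colon K\cQ\to\Lambda$ is a $K$-basis of $\Lambda$, and likewise for $\cN'$. I will identify each $n\in\cN$ (respectively $n'\in\cN'$) with its image in $\Lambda$ (respectively $\Lambda'$) from this point onward, which is consistent with the identification convention used earlier in the paper.

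Next I would invoke the elementary fact that if $V$ and $W$ are $K$-vector spaces with bases $B\subseteq V$ and $B'\subseteq W$, then $\{b\otimes_K b'\mid b\in B,\ b'\in B'\}$ is a $K$-basis of $V\otimes_K W$. Applying this with $V=\Lambda$, $W=\Lambda'$, $B=\cN$, $B'=\cN'$ yields that $\{n\otimes_K n'\mid n\in\cN,\ n'\in\cN'\}$ is a $K$-basis for $\Lambda\otimes_K\Lambda'$, which is exactly the assertion.

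There is no real obstacle here; the only care needed is to notice that, although $\cN$ and $\cN'$ are introduced as subsets of the path algebras, the Fundamental Lemma guarantees that their images in the quotient algebras remain $K$-linearly independent and span, and that the tensor product in the statement is over the ground field, so the standard basis theorem for tensor products of vector spaces applies without modification.
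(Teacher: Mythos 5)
Your proposal is correct and follows essentially the same route as the paper: the Fundamental Lemma gives that $\cN$ and $\cN'$ are $K$-bases of $\Lambda$ and $\Lambda'$ respectively, and then the standard basis theorem for tensor products of vector spaces over a field finishes the argument. You merely spell out the quotient identifications that the paper's (very terse) proof leaves implicit.
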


\begin{proof}  
By the Fundamental Lemma, we have that $\cN$ is a $K$-basis of $\Lambda$ and
$\cN'$ is a $K$-basis of $\Lambda'$.  Since we are tensoring over $K$, the result
follows.

\end{proof}
The first part of the
following result is quite general.  The proof of the result 
is somewhat technical but straightforward, and we leave routine checking to the reader. 

\begin{theorem}\label{thm-tensor} Suppose that $\Lambda=K\cQ/I$, $\Lambda'=K\cQ'/I'$
are $K$-algebras, and $K\cQ^*$ and $I^*$ are  defined above.  Let $\succ$ and
$\succ'$ be length admissible orders for $\cB$ and $\cB'$, respectively, and let
$\succ^*$ be the length admissible order defined above.  Let $\cG$ and $\cG'$ be
the reduced \grb  bases for $I$ and $I'$, respectively.  Then
\begin{enumerate}
\item  \[\cG^* =\{(g,w')\mid g\in\cG, w'\in\cQ'_0\}\cup
\{(v,g')\mid g'\in\cG', v\in\cQ_0\}\cup C\] is the reduced  \grb basis for $I^*$ with respect to
$\succ^*$.  If $\cG$ and $\cG'$ are composed of length homogeneous elements, then
so is $\cG^*$ and in this case $\cG^*$ is a graded \grb basis for the induced length graded
algebra $K\cG^*/I^*=\Lambda\otimes_K\Lambda'$.
\item If $\Lambda=K\cQ/I$ and $\Lambda'=K\cQ'/I'$ are strong Koszul algebras  (with
respect to $I$ and $\succ$) and (with respect to $I'$ and $\succ'$), respectively, then
$\Lambda\otimes_K\Lambda'=K\cQ^*/I^*$ is a strong Koszul algebra (with respect to
$I^*$ and $\succ^*$), 
\end{enumerate} 
\end{theorem}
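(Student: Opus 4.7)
The plan is to reduce (2) to (1) and then attack (1) via the Fundamental Lemma (Lemma \ref{lem-fund}) together with the explicit basis for $\Lambda\otimes_K\Lambda'$ supplied by Lemma \ref{lem-basis}. For (2): once (1) is in hand, each element of $\cG^*$ is quadratic whenever the elements of $\cG$ and $\cG'$ are, since the relations $(g,w')$ and $(v,g')$ inherit the length of $g$ and $g'$ respectively, and the commutativity relations in $C$ are visibly length two; so Definition \ref{def-st-kosz} immediately gives that $\Lambda\otimes_K\Lambda' = K\cQ^*/I^*$ is strong Koszul with respect to $I^*$ and $\succ^*$. The length-homogeneity claim inside (1) is similarly immediate.

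For (1) I would first verify that $\cG^* \subseteq I^*$. Letting $\pi\colon K\cQ\to\Lambda$ and $\pi'\colon K\cQ'\to\Lambda'$ denote the canonical surjections, the map $\varphi$ of the setup sends $(g,w')$ to $\pi(g)\otimes w' = 0$, sends $(v,g')$ to $v\otimes \pi'(g') = 0$, and kills each commutativity relation in $C$ because tensor multiplication is componentwise. Next I would compute $\tip$ on each type of generator using the four rules defining $\succ^*$: rules (3) and (4) give $\tip((g,w')) = (\tip(g),w')$, rule (1) gives $\tip((v,g')) = (v,\tip(g'))$, and rule (2) forces the ``$\cQ'$-arrow then $\cQ$-arrow'' piece $(u,b')(a,x')$ of a commutativity relation to dominate its partner $(a,w')(v,b')$. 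Proposition \ref{prop-tip-nontip} then identifies $\nontip(\langle\tip(\cG^*)\rangle)$ as the set of paths $p^* \in \cB^*$ containing no consecutive ``$\cQ'$-arrow followed by $\cQ$-arrow''; any such $p^*$ admits a unique factorization $p^* = (p,w')(v,q')$ with $p \in \cB$ ending at $v$ and $q' \in \cB'$ starting at $w'$, and avoiding the other two families of tips forces $p \in \cN := \nontip(I)$ and $q' \in \cN' := \nontip(I')$.

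The \grb basis property now follows from a dimension argument. Because $\succ^*$ is a well-order, repeated simple reductions by $\cG^*$ terminate, so $K\cQ^*/\langle\cG^*\rangle$ is spanned by the images of the paths in $\nontip(\langle\tip(\cG^*)\rangle)$. Composing with the natural surjection onto $K\cQ^*/I^* = \Lambda\otimes_K\Lambda'$ and applying $\varphi$, the spanning path $(p,w')(v,q')$ maps to $\pi(p)\otimes\pi'(q')$, and by Lemma \ref{lem-basis} these images form a $K$-basis of $\Lambda\otimes_K\Lambda'$ as $(p,q')$ ranges over $\cN\times\cN'$ with matching vertices. Therefore the surjection $K\cQ^*/\langle\cG^*\rangle\twoheadrightarrow \Lambda\otimes_K\Lambda'$ is an isomorphism, which forces $\langle\cG^*\rangle = I^*$ and $\nontip(\langle\tip(\cG^*)\rangle) = \nontip(I^*)$; hence $\langle\tip(\cG^*)\rangle = \langle\tip(I^*)\rangle$ as required by Definition \ref{def-gb}. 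Reducedness of $\cG^*$ is then immediate: each tip appears with coefficient $1$ in its generator, and the non-tip monomials $(p,w')$ with $p \in \cN$, $(v,q')$ with $q' \in \cN'$, and $(a,w')(v,b')$ all lie in $\nontip(\langle\tip(\cG^*)\rangle)$. The main obstacle is really the book-keeping: verifying that the four rules defining $\succ^*$ yield a length admissible well-order on $\cB^*$ and force the tip computations above; once $\succ^*$ is in place, everything downstream is formal.
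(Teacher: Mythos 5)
Your proposal is correct, but it reaches the Gr\"obner property by a genuinely different route than the paper. The paper's proof has two stages: it first proves $\langle\cG^*\rangle=I^*$ directly, by using Lemma \ref{lem-comm} to put an arbitrary element of $I^*$ into the normal form $\sum_i\alpha_i(p_i,w_i')(v_i,q_i')$, then splitting each factor via the Fundamental Lemma and invoking Lemma \ref{lem-basis} to kill the ``nontip tensor nontip'' part; it then verifies the \grb property by Buchberger's criterion, explicitly reducing to zero the overlap relations among the $(g,w')$'s, the $(v,g')$'s, and the commutativity relations. You prove only the containment $\cG^*\subseteq I^*$, compute $\tip(\cG^*)$, identify $\cN^*:=\nontip(\langle\tip(\cG^*)\rangle)$ with $\cN\times\cN'$, and then run a spanning-plus-independence argument: reduction by $\cG^*$ (termination via the well-order) shows $\Span_K(\cN^*)$ surjects onto $K\cQ^*/\langle\cG^*\rangle$, while Lemma \ref{lem-basis} shows the composite map $\Span_K(\cN^*)\to\Lambda\otimes_K\Lambda'$ carries the paths of $\cN^*$ bijectively onto a $K$-basis, hence is injective. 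This single counting argument yields simultaneously $\langle\cG^*\rangle=I^*$ (which the paper must prove separately) and $\Span_K(\cN^*)=\Span_K(\nontip(I^*))$, whence $\langle\tip(\cG^*)\rangle=\langle\tip(I^*)\rangle$; the only step you elide is the routine observation that two complements of $I^*$, one contained in the other (since $\nontip(I^*)\subseteq\cN^*$), must coincide. What your approach buys is complete freedom from overlap computations, at the price of needing the target algebra's basis in advance --- available here exactly because the tensor product is over the field $K$. What the paper's Buchberger route buys is explicit information on how the commutativity relations reduce against $\cG$ and $\cG'$, which is reusable (e.g., for actual computations in $\GrAlg(\cT)$). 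Two minor points: your phrase ``with matching vertices'' when invoking Lemma \ref{lem-basis} is misleading, since there is no matching condition --- $v$ and $w'$ are determined by $p$ and $q'$; and both your argument and the paper's rest on the unproved assertions that $\succ^*$ is length admissible and that $\tip((g,w'))=(\tip(g),w')$ and $\tip((v,g'))=(v,\tip(g'))$, which you at least flag explicitly, and which do require compatibility of $\succ$ and $\succ'$ with first-differing-arrow comparison (true for the length-left-lexicographic orders used in the paper's examples).
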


\begin{proof}
First  we  show that $\cG^*$ generates $I^*$.
Clearly $\cG^*\subseteq I^*$.  
Suppose $X=\sum_i\alpha_{p_i^*}p_i^*\in I^*$, where $\alpha_{p_i^*}\in K$ and
$p_i^*\in\cB^*$.  Then, by Lemma \ref{lem-comm}, since $\cC\subset I^*$, by
repeatedly applying the commutativity relations, we
may assume that $X=\sum_i\alpha_i(p_i,w'_i)(v_i,q'_i)$ where
$\alpha_i\in K$, $p_i\in\cB, v_i\in \cQ_0,q'_i\in\cB'$, and $ w'_i\in \cQ'_0$. 
Next we apply the Fundamental Lemma and write, for each $i$,
$p_i= \iota_i+N_i$ and $q'_i=\iota'_i+N'_i$ where
$\iota_i\in I$, $N_i\in \Span_K(\cN)$ and
$\iota'_i\in I'$, $N'_i\in \Span_K(\cN')$.
Thus, 
\[X=\sum_i\alpha_i( \iota_i+N_i,w')(v_i,\iota'_i+N'_i)=
\sum_i\alpha_i[(\iota_i,\iota'_i)+(\iota,N_i')+(N_i,\iota'_i)+(N_i,N'_i)].\]
We are assuming that $\varphi(X)=0$.  Since $\iota_i\in I$ and $\iota'_i\in I'$,
we see that \[0=\varphi(X)=\varphi(\sum_i\alpha_i(N_i,N'_i))=\sum_i\alpha_iN_i\otimes N'_i.\]
Hence $0=\sum_i\alpha_iN_i\otimes N'_i$. 
Let $N_i=\sum_j\beta_{i,j}n_j$ and $N'_i=\sum\beta'_{i,j}n'_j$ with $n_j\in\cN$ and $n'_{j}\in\cN'$.  Then $\sum_i \alpha_i\beta_{i,j}\beta'_{i,j}=0$.
 This implies $\sum_i\alpha_i(N_i,N'_i)=0$.
Thus $X=\sum_i\alpha_i[(\iota_i,\iota'_i)+(\iota,N_i')+(N_i,\iota'_i)]$ and $X$ is generated
by $\cG^*$.

The elements of $\cG^*$ are clearly uniform.  Overlap relations  involving two elements
of the form $(g,w')$ with $g\in\cG$ completely reduce 0 using the complete reduction to 0
in $K\cQ$ by $\cG$.   Similarly,the overlap  relation of
two elements of the form $(v,g')$ with $g'\in\cG'$ completely reduce
to 0 by $\cG'$.  Now suppose $g=t - N\in\cG$ with $\tip(g)=t$.  Consider 
$Ov((u,b')(a,x'),( t,x'))$ where $ (u,b')(a,x')-(a,w')(v,b')\in  C$  and $t=a\hat t$ (with
$a\in\cQ_1, b\in\cQ'_1)$.
Then $Ov((u,b')(a,x'),(t,x'))=-(a,w')(v,b')(\hat t,x') + (u,b')(N,x')$.  Using simple reduction
involving elements of $C$, we commute $(u,b')$ past the elments of $\hat t$ and $N$ (changing
the vertices  as needed) to obtain that $Ov((u,b')(a,x'),(t,x'))$ reduces
to $-(a,w')(\hat t,w')(v,b') + (N,w')(v,b')$.  But $-(a,w')(\hat t,w')(v,b') + N(v,b')=-(a\hat t-N,w')
(v,b')=-(g,w')(v,b')$.  Since $(g,w)\in\cG^*$,   $Ov((u,b')(a,x'),(t,x'))$
completely reduces to 0.  The case of element of the form $(v,g')$, overlapping a commutitivity
relation, completely reducing to 0 is similar.  This completes the proof that $\cG$ is
a \grb basis for $I^*$ with respect to $\succ^*$.  We leave it to the reader to show that 
$\cG^*$ is the reduced
\grb basis.  This completes the proof of part 1.

The proof of part 2 is straightforward and left to the reader.
\end{proof}

The next result follows from Proposition \ref{prop-op} and Theorem \ref{thm-tensor}.

\begin{corollary}\label{cor-env}  Let $\Lambda=K\cQ/I$ be a strong Koszul algebra
(with respect to $I$ and $\succ$).  Let $\succ^*$ be the length admissible order
defined above where $\Lambda'=\Lambda^{op}$.  Then the enveloping algebra
$\Lambda\otimes_K\Lambda^{op}=K\cQ^*/I^*$ is a strong Koszul algebra  (with
respect to $I^*$ and $\succ^*$). \qed
\end{corollary}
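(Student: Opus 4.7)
The plan is to combine the two immediately preceding results, Proposition \ref{prop-op} and Theorem \ref{thm-tensor}(2), in the only natural way. First, I would apply Proposition \ref{prop-op} to the given strong Koszul algebra $\Lambda = K\cQ/I$ (with respect to $I$ and $\succ$) to conclude that $\Lambda^{op} = K\cQ^{op}/I^{op}$ is a strong Koszul algebra with respect to $I^{op}$ and $\succ^{op}$. This step is purely formal: it supplies the second factor needed for the tensor construction in the form required by Theorem \ref{thm-tensor}.

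Next, I would set $\Lambda' = \Lambda^{op}$, $\cQ' = \cQ^{op}$, $I' = I^{op}$, and $\succ' = \succ^{op}$, so that the hypotheses of Theorem \ref{thm-tensor}(2) are satisfied. Invoking part (2) of that theorem then directly gives that $\Lambda \otimes_K \Lambda^{op} = K\cQ^*/I^*$ is a strong Koszul algebra with respect to $I^*$ and the length admissible order $\succ^*$ constructed in the excerpt (now built out of $\succ$ and $\succ^{op}$).

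There is essentially no obstacle here, since the two ingredients have already been established. The only thing worth a brief sentence of verification is that the construction of $\cQ^*$, $I^*$, and $\succ^*$ introduced just before Theorem \ref{thm-tensor} applies verbatim when the second factor is taken to be $\Lambda^{op}$, i.e.\ that one may legitimately substitute $\cQ' = \cQ^{op}$ and $\succ' = \succ^{op}$ into the definitions. This is immediate from the definitions, as $\succ^{op}$ is a length admissible order on the paths of $\cQ^{op}$ by Proposition \ref{prop-op} and the discussion preceding it. Hence the corollary follows in one line, with no further computation required.
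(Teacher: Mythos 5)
Your proposal is correct and is exactly the paper's own argument: the paper derives Corollary \ref{cor-env} precisely by applying Proposition \ref{prop-op} to obtain that $\Lambda^{op}=K\cQ^{op}/I^{op}$ is strong Koszul with respect to $I^{op}$ and $\succ^{op}$, and then invoking Theorem \ref{thm-tensor}(2) with $\Lambda'=\Lambda^{op}$. Nothing further is needed.
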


\section{Remarks and questions}\label{sec-rem}  
The goal of this paper was to introduce the variety $\GrAlg(\cT)$ and its connection
to Koszul algebras.  We believe this connection will lead to further interesting
results, and, to that end, we present a number of open questions.  By dropping the restriction that $\cT$
is composed of paths of length 2, thus allowing $\cT$ to be an arbitrary
finite set of paths, one
still has a variety $\GrAlg(\cT)$ whose points correspond to certain graded algebras.  
In this more general setting, if one drops the length homogeneity restriction, one still obtains
an algebraic variety whose points correspond to (not necessarily graded) algebras.
Such connections are  currently under investigation in \cite{GHS}.

In the case of Koszul algebras that are not strong, one still can look at an appropriate
variety with $\cT$ being the set $\tip(\cG)$ where $\cG$ is a reduced \grb basis
with respect to some admissible  order. In such a variety, $K\cQ/\langle \cT\rangle$
is not a Koszul algebra since it is a nonquadratic monomial
algebra.  This leads to the following question:

\begin{Question}\label{qu-present}{\rm Suppose $\Lambda=K\cQ/I$ is a Koszul algebra
such that the reduced \grb basis $\cG$ is not a quadratic ideal. Then
$\Lambda_{Mon}=K\cQ/\langle \tip(\cG)\rangle$ is not a Koszul algebra.
Thus $\Lambda$ and $\Lambda_{Mon}$ are in $\GrAlg(\tip(\cG))$.   Are there necessarily
other Koszul algebras in $GrAlg(\tip(\cG))$ and, if so, does the set of
Koszul algebras in   $GrAlg(\tip(\cG))$ have some geometrical interpretation?
In particular, it would be interesting to study $GrAlg(\tip(\cG))$ for the
Sklyanin algebras. \qed
}
\end{Question}

\begin{Question}\label{qu-dual}{\rm If $\Lambda=K\cQ/I$ is a strong Koszul algebra, is
there a length admissible order $\succ^{\perp}$ on the paths in quiver $\cQ^{op}$
so that the Koszul dual, $\oplus_{n\ge 0}\Ext_{\Lambda}^n(\overline{\Lambda}.
\overline{\Lambda})$, is a strong Koszul algebra, where $\overline{\Lambda}$ is
$K\cQ/J$ with $J=\langle \cQ_1\rangle$?  We believe the answer is no.
\qed}\end{Question}

One can drop the graded restriction and allow arrows together with
paths of length two in a \grb basis.  The new variety contains $\GrAlg(\cT)$.
It would be interesting to study how these two varieties relate to one another. 

Basic geometic questions need to be investigated, some of which are listed below.  

\begin{Question}\label{qu-geo}{\rm
Is $\GrAlg(\cT)$ irreducible? Given Example \ref{ex-poly3-1} one expects
that the answer, in general, is no.  Example \ref{ex-poly2} shows that sometimes
$\GrAlg(\cT)$ is irreducible. What do the
irreducible components look like, and is there an interpretation
of irreducibility in terms of the algebras?
\qed
}\end{Question}

\begin{Question} {\rm Does every affine algebraic variety occur as some $\GrAlg(\cT)$? $\GrAlg_{\psi}(\cT)$?
}\end{Question}
\begin{Question}{\rm
What is the dimension of $\GrAlg(\cT)$ in terms of some invariant, like $\cT$?  Is there
an algorithm to compute it?
\qed}\end{Question}

\begin{Question}{\rm
Characterize when  $\GrAlg(\cT)$ is a point?
\qed}\end{Question}

\begin{Question}{\rm
Characterize when  $\GrAlg(\cT)$ is all of affine space?
\qed}\end{Question}

\begin{Question}\label{qu-self}{\rm  Suppose $\Lambda=K\cQ/I$ is a 
finite dimensional, selfinjective, strong
Koszul algebra (with respect to $I$ and $\succ$), does $\GrAlg(\tip(I))$ necessarily
contain other selfinjective, strong Koszul algebras?  Does the set of selfinjective
strong Koszul algebras have a geometrical interpretation in $\GrAlg(\tip(I))$?

Is there a geometric condition on $\GrAlg(\cT)$ that assures the existence of
a finite dimensional selfinjective algebra in $\GrAlg{\cT}$?
\qed}
\end{Question}

\begin{Question}\label{qu-bound}{\rm  Let $\cT$ be a set of paths of length 2 in
a quiver $\cQ$ and let $\succ$ be a length admissible order on the set
of  paths in $\cQ$.  Let $\cI$ be the ideal of the variety $\GrAlg(\cT)$.  Is there
a fast way to determine the degrees of a generating set  of  polynomials in
$\cI$ or a bound on their degrees?  More precisely,  overlap relations
result in paths  of length 3.  There is a bound on the number of simple reductions
needed to completely reduce to words  that are in $\cN_3$, i.e., nontips of length
3.   This number (plus 1)
should bound the degree of the polynomials in $\cI$.  Is there a fast way to find this number
given $\cT$?    
\qed
}\end{Question}

\end{document}